\documentclass[12pt,oneside,reqno]{amsart}
\usepackage[margin=1in]{geometry}
\usepackage{color}
\usepackage{esint,amssymb}
\usepackage{graphicx}
\usepackage{MnSymbol}
\usepackage{mathtools}
\usepackage[colorlinks=true, pdfstartview=FitV, linkcolor=blue, citecolor=blue, urlcolor=blue,pagebackref=false]{hyperref}
\usepackage{microtype}
\usepackage{amsmath}
\usepackage{xifthen}
\usepackage{verbatim}
\definecolor{darkgreen}{rgb}{0,0.5,0}
\definecolor{darkblue}{rgb}{0,0,0.7}
\definecolor{darkred}{rgb}{0.9,0.1,0.1}
\usepackage[all]{xy}

\newtheorem{theorem}{Theorem}
\newtheorem{proposition}[theorem]{Proposition}
\newtheorem{lemma}[theorem]{Lemma}

\theoremstyle{definition}
\newtheorem{remark}[theorem]{Remark}

\newtheorem{definition}[theorem]{Definition}
\newtheorem{conjecture}[theorem]{Conjecture}

\newcommand{\cref}[1]{Corollary~\ref{c.#1}}

\numberwithin{equation}{section}
\numberwithin{theorem}{section}

\newcommand{\N}{\mathbb{N}}
\newcommand{\R}{\mathbb{R}}
\newcommand{\C}{\mathbb{C}}

\renewcommand{\subset}{\subseteq}

\newcommand{\test}[1][]{%
\ifthenelse{\equal{#1}{}}{omitted}{given}%
}

\newcommand{\pa}{\partial}

\renewcommand{\bar}{\overline}
\renewcommand{\tilde}{\widetilde}

\renewcommand{\part}{\partial}

\usepackage{dsfont}
\usepackage{slashed}

\newcommand{\SL}{\operatorname{SL}}
\newcommand{\Tr}{\operatorname{Tr}}
\newcommand{\SU}{\operatorname{SU}}
\newcommand{\EBE}{\operatorname{EBE}}
\newcommand{\TBE}{\operatorname{TBE}}

\begin{document}
\title{The moduli space of solutions to the Extended Bogomolny equations on $\Sigma\times\R_+$}

\author[P. Dimakis]{Panagiotis Dimakis}
\address[Panagiotis Dimakis]{450 Serra Mall, Bldg 380, Stanford, CA 94305}
\email{pdimakis@stanford.edu}
\keywords{}
\subjclass[2010]{}
\date{\today}
\begin{abstract}
We study moduli spaces of solutions to the extended Bogomolny equations on $\Sigma\times\R_{+,y}$ with gauge group $\SL(2,\C)$ satisfying the generalized Nahm pole boundary condition as $y\to 0$ and limiting to complex flat connections as $y\to \infty$. Refining the Kobayashi-Hitchin correspondence of \cite{MH2} we identify these moduli spaces with certain holomorphic lagrangian sub-manifolds inside the moduli space of Higgs bundles. 
\end{abstract}

\maketitle
\section{Introduction}

The aim of this paper is to study the moduli structure of solutions to the dimensional reduction of the Kapustin-Witten equations on manifolds of the form $\Sigma\times\R_{+,y}$ where $\Sigma$ is a closed Riemann surface of genus $g\ge 2$ satisfying the generalized Nahm pole boundary condition as $y\to 0$ and converging to a flat $\SL(2,\C)$ connection as $y\to \infty$. 

In order to motivate the question studied in this paper we briefly introduce the Kapustin-Witten equations (KW) and Witten's suggestion \cite{KW,WFivebranes, GW, MW1, MW2} that solutions to the KW equations satisfying specific geometrically motivated asymptotic conditions should contain information about the topology of the underlying space. Given a four-manifold $(M^4, g)$, a $G$-bundle $E$ over $M$ a connection $A$ on $E$ and an ad$(E)$-valued $1$-form $\phi$, the KW equations are given by 

\begin{equation}\label{KW}
\begin{split}
F_A - \phi\wedge\phi + \star d_A\phi &= 0\\
d_A\star\phi &= 0
\end{split}
\end{equation}
When $M = W\times\R_{+,y}$ and $L\hookrightarrow W\times\{0\}$ is an embedded link, Witten conjectured that the counting solutions to the KW equations satisfying the generalized Nahm pole boundary condition as $y\to 0$ \cite{MW2} and converging to a complex flat connection as $y\to \infty$ should recover the coefficients of the Jones polynomial of the link $L$ when $W = S^3$ and define a generalization of the Jones polynomial in general. 

Constructing solutions to the KW equations satisfying the required asymptotic conditions when the link $L$ is non-empty is a major obstacle in the above proposal. As an intermediate step towards constructing solutions Gaiotto and Witten \cite{GW} suggested an adiabatic approach to this problem. Consider a Heegard splitting $W= H_1\cup_{\Sigma}H_2$ and stretch the metric so that $W$ contains a long neck $\Sigma\times [-L,L]$. Choosing the splitting appropriately and taking the limit $L\to \infty$, one would like to understand solutions to the KW equations on $M = \Sigma\times\R_{x_1}\times \R_{+,y}$ and the link being $n$ straight lines paralle to the $x_1$-axis. If we require that the solutions are $x_1$-invariant, then we obtain the extended Bogomolny equations (EB).

In \cite{MH2} the authors proved that there exists a bijective correspondence between solutions to the EB equations and triples 
$(\bar\pa_E, \Phi, \mathcal L)$, where $(\bar\pa_E,\Phi)$ is a stable Higgs pair on the restriction of $E$ on $\Sigma$ and 
$\mathcal L \hookrightarrow E$ is a holomorphically embedded line bundle. This Kobayashi-Hitchin type correspondence is set theoretic. 

In order to state the main theorems of the present paper, we need certain structural results for the moduli space of Higgs bundles $\mathcal M_H$. In \cite{CW} the authors prove the existence of a Byalinicki-Birula stratification of $M_H$. Each stratum is a holomorphic fibration by holomorphic lagrangian sub-manifolds $W^0(\bar\pa_0,\Phi_0)$ over a connected component of the set of complex variations of Hodge structure.When the gauge group is $\SL(2,\C)$ the different strata are indexed by an even integer in the interval $\{0,...,2g-2\}$. 

In the adiabatic problem described above, to each parallel straight line $l_j$ we attach a positive integer $k_j$ called the magnetic charge. The points where the $n$ parallel straight lines intersect $\Sigma\times\{0\}$ with the associated magnetic charges determine an effective divisor $\mathcal D := \{ (p_1,k_1),...,(p_n,k_n)\}$. Denote by $|\mathcal D|:= \sum\limits_{j=1}^n k_j$ the total magnetic charge associated to the divisor. We call $\mathcal D$ even if the total magnetic charge is even. 
\begin{theorem}\label{Main}

Given an even effective divisor $\mathcal D$ denote the moduli space of solutions to the EB equations with divisor $\mathcal D$ as $\underline{\EBE}_{\mathcal D}$. We define $\mathcal L := K(-\mathcal D)^{1/2}$ where $K$ is the canonical bundle of $\Sigma$. 
\begin{itemize}
\item When $|\mathcal D| < 2g-2$ the moduli space $\underline{\EBE}_{\mathcal D}$  is diffeomorphic to $W^0(\bar\pa_0,\Phi_0)$ with $E\cong \mathcal L\oplus \mathcal L^{\star}$ holomorphically and $\Phi_0$ the unique compatible nilpotent Higgs field.
\item When $|\mathcal D| \ge 2g-2$ let $\mathcal M_{\mathcal L}$ be the subvariety of stable bundles inside $\mathcal M_H$ which are extensions of the form \[0 \rightarrow \mathcal L \rightarrow E \rightarrow \mathcal L^{\star} \rightarrow 0.\]
The moduli space $\underline{\EBE}_{\mathcal D}$ is diffeomorphic to a holomorphic Lagrangian inside $\mathcal M_H$ which is topologically the total space of a fiber bundle with affine fibers over $\mathcal M_{\mathcal L}$ minus the zero section. 
\end{itemize}
\end{theorem}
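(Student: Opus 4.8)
The plan is to route everything through the set-theoretic Kobayashi--Hitchin correspondence of \cite{MH2}, upgrade it to an isomorphism of moduli spaces, and then analyze the resulting Higgs-bundle picture, where the shape of the answer depends only on the sign of $\deg\mathcal L = g-1-|\mathcal D|/2$; thus $|\mathcal D|<2g-2$ versus $|\mathcal D|\ge 2g-2$ is exactly $\deg\mathcal L>0$ versus $\deg\mathcal L\le 0$.

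\textbf{Step 1 (translation).} First I would make precise what the generalized Nahm pole boundary condition with divisor $\mathcal D$, together with the requirement that the solution limit to a complex flat $\SL(2,\C)$ connection as $y\to\infty$, imposes on the triple $(\bar\pa_E,\Phi,\mathcal L)$ of \cite{MH2}: the holomorphic type of the embedded line bundle is forced to be $\mathcal L=K(-\mathcal D)^{1/2}$; the pair $(\bar\pa_E,\Phi)$ is a \emph{stable} Higgs bundle; and the induced map $\bar\Phi\colon\mathcal L\to(E/\mathcal L)\otimes K$, a section of $(E/\mathcal L)\otimes\mathcal L^{\star}\otimes K\cong\mathcal O(\mathcal D)$, is the canonical section $s_{\mathcal D}$ vanishing to order $k_j$ at $p_j$ (up to scale). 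So, set-theoretically, $\underline{\EBE}_{\mathcal D}$ is the space of such triples modulo isomorphisms of $E$ preserving $\mathcal L$. The substantive point is that this bijection is a diffeomorphism for the natural smooth structures on both sides; I would establish this as the families version of the \cite{MH2} correspondence --- the refinement advertised in the introduction --- which I expect to be the technical heart, obtained by an implicit-function-theorem argument over the Higgs moduli.

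\textbf{Step 2 (the case $\deg\mathcal L>0$).} Here the underlying bundle $E$ is always unstable, with $\mathcal L$ its unique sub-line-bundle of positive degree, hence its maximal destabilizing subbundle, and $(\bar\pa_E,\Phi)$ is automatically a stable Higgs bundle since its only positive-degree subbundle $\mathcal L$ is not $\Phi$-invariant ($\bar\Phi=s_{\mathcal D}\neq 0$). I would then compute the limit of $(\bar\pa_E,t\Phi)$ as $t\to 0$ under the Hitchin $\C^{*}$-action: conjugating by $\mathrm{diag}(t^{1/2},t^{-1/2})$ in a $C^{\infty}$-splitting adapted to $\mathcal L\subset E$ sends the extension class and the diagonal and off-$\mathcal L$ parts of $\Phi$ to zero while preserving $\bar\Phi$, so the limit is the complex variation of Hodge structure (VHS) $(\bar\pa_0,\Phi_0)$ with $E\cong\mathcal L\oplus\mathcal L^{\star}$ and $\Phi_0$ the nilpotent Higgs field whose single nonzero entry is $s_{\mathcal D}$ --- the ``unique compatible nilpotent Higgs field.'' Hence the forgetful map $(\bar\pa_E,\Phi,\mathcal L)\mapsto(\bar\pa_E,\Phi)$ lands in the fibre $W^{0}(\bar\pa_0,\Phi_0)$ of the Byalinicki-Birula stratum over that VHS, and it is injective because $\mathcal L$ is recovered from $(E,\Phi)$ as the maximal destabilizing subbundle. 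For surjectivity I would invoke \cite{CW}'s description of $W^{0}(\bar\pa_0,\Phi_0)$ as precisely the variety of Higgs bundles carrying a filtration with associated graded $(\bar\pa_0,\Phi_0)$ --- i.e. an embedding $\mathcal L\hookrightarrow E$ with $\bar\Phi=s_{\mathcal D}$, which is EB data (stability being automatic as above). With Step 1 and smoothness of $W^{0}$ this gives the claimed diffeomorphism.

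\textbf{Step 3 (the case $\deg\mathcal L\le 0$).} Now $\mathcal L$ no longer destabilizes $E$. I would first show the underlying bundle $E$ must be stable: an unstable $E$ would have a destabilizing subbundle of the form $\mathcal L^{\star}(-D_0)$, with $D_0$ effective, meeting $\mathcal L$ trivially and mapping isomorphically onto a subsheaf of $E/\mathcal L$, and I would argue, using the precise vanishing of $\bar\Phi=s_{\mathcal D}$ along $\mathcal D$, that $\Phi$ is then forced to preserve this subbundle, contradicting stability of $(\bar\pa_E,\Phi)$. So $E\in\mathcal M_{\mathcal L}$, and since $\mathrm{Hom}(\mathcal L^{\star},\mathcal L)=H^{0}(K(-\mathcal D))$ vanishes for $|\mathcal D|>2g-2$ (the boundary case $|\mathcal D|=2g-2$ treated separately) the embedding $\mathcal L\hookrightarrow E$ is unique up to scale, so $(\bar\pa_E,\Phi,\mathcal L)\mapsto(\bar\pa_E,\Phi)$ embeds $\underline{\EBE}_{\mathcal D}$ into the open subset $T^{*}\mathcal N\subset\mathcal M_H$ ($\mathcal N$ the moduli of stable bundles), over $\mathcal M_{\mathcal L}\subset\mathcal N$. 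Over a fixed $E$ the admissible $\Phi$ form the affine subspace $\{\Phi\in H^{0}(\mathrm{End}_{0}E\otimes K):\bar\Phi=s_{\mathcal D}\}$ of $T^{*}_{E}\mathcal N=H^{0}(\mathrm{End}_{0}E\otimes K)$, a torsor under the space of $\Phi$ preserving $\mathcal L$, which I would identify with the conormal space to $\mathcal M_{\mathcal L}$ in $\mathcal N$; globally this exhibits $\underline{\EBE}_{\mathcal D}$ as an affine subbundle of the conormal bundle of $\mathcal M_{\mathcal L}$, translated by the fibrewise-linear (hence closed) $1$-form determined by $s_{\mathcal D}$, over $\mathcal M_{\mathcal L}$ with a distinguished ``zero section'' --- the locus where the affine fibre degenerates --- removed. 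An affine subbundle of $T^{*}\mathcal N$ modeled on a conormal bundle and translated by a closed form is holomorphic Lagrangian, and a Riemann--Roch count of $\dim\mathcal M_{\mathcal L}$ plus the conormal rank gives $3g-3=\tfrac12\dim_{\C}\mathcal M_H$, confirming the dimension. The diffeomorphism then follows from Step 1.

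\textbf{Expected main obstacle.} The genuinely hard input is the families refinement of the \cite{MH2} correspondence in Step 1 --- promoting a set-theoretic bijection to a diffeomorphism of moduli spaces --- which I expect to occupy the earlier part of the paper. Within the argument above, the delicate points are, in Step 3, proving that the underlying bundle is forced to be stable and correctly identifying the affine-bundle structure together with the precise ``zero section,'' and, throughout, matching conventions with \cite{CW} so that the $W^{0}$-stratum and conormal-bundle descriptions line up on the nose; granting the conormal/coisotropic-reduction picture, the holomorphic Lagrangian property should then be essentially formal.
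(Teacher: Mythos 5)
Your overall architecture (reduce to the holomorphic picture coming from \cite{MH2}, then split on the sign of $\deg\mathcal L$) matches the paper, and your Step~2 is essentially the paper's argument: Theorem~\ref{positive degree} identifies the Higgs pairs preserving the section $1\wedge\Phi$ with $W^0(\bar\pa_0,\Phi_0)$ using exactly the normal form of Proposition~\ref{normal form} from \cite{CW}, which is equivalent to your explicit $\C^{\star}$-limit computation. However, the two steps you defer or merely assert are precisely the ones the paper treats as the substance of the proof, and as written they are gaps. First, the upgrade from set bijection to diffeomorphism is not carried out by an implicit-function-theorem or families argument over $\mathcal M_H$: the paper linearizes the EB equations directly, fixes the gauge in which the $\phi_1$-perturbation vanishes, uses $[\mathcal D_i,\mathcal D_3]=0$ to conjugate the perturbations to $y$-independent pairs $(\beta,\varphi)$, and then --- this is the key computation --- shows that preserving the divisor $\mathcal D$ forces $(\beta,\varphi)\in\Omega^{0,1}(N_+)\oplus\Omega^{1,0}(L\oplus N_+)$, because $\phi=g^{-1}\varphi g$ with $g\sim \mathrm{diag}(e^{-u_k/2},e^{u_k/2})$ blows up faster than the allowed $\mathcal O(y^{-1+\epsilon})$ unless the lower-left entry of $\varphi$ vanishes identically. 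The index computation for the complex $C_+(\bar\pa_E,\Phi)$ then gives dimension $6g-6$ and matches the tangent space of the solution set with that of the Lagrangian. Your proposal contains none of this, so the claimed diffeomorphisms are not actually produced.

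Second, in Step~3 you call the fibre $\{\Phi:\bar\Phi=s_{\mathcal D}\}$ over a fixed $E$ ``a torsor under the $\Phi$ preserving $\mathcal L$'' --- but a torsor can be empty, and non-emptiness, i.e.\ surjectivity of $H^0((\mathrm{End}\,\mathcal E)_0\otimes K)\to H^0(K\otimes\mathcal L^{-2})$, is exactly what the paper singles out as ``the non-trivial part of the proof'' and establishes via the commutative sheaf diagram built from the extension $0\to\mathcal L\to E\to\mathcal L^{\star}\to 0$. You assume it. Relatedly, your mechanism for showing the underlying bundle is stable (``$\Phi$ is then forced to preserve the destabilizing subbundle'') does not obviously work: for instance on $E=\mathcal L\oplus\mathcal L^{\star}$ with $2g-2\le|\mathcal D|\le 4g-4$ the entry $b\in H^0(K^2(-\mathcal D))$ can be nonzero, so $\mathcal L^{\star}$ need not be $\Phi$-invariant and one can expect stable Higgs pairs with $\bar\Phi=s_{\mathcal D}$ on an unstable bundle. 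The identification of the base of the fibration with $\mathcal M_{\mathcal L}$ therefore needs a genuine argument (the paper is itself terse on this point, but the argument you sketch for it appears to fail).
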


The proof of theorem \eqref{Main} is comprised of two steps. The first step is the analysis of the kernel of the linearization of the EB equations modulo gauge transformations. In order to do this we analyse how the generalized Nahm pole boundary condition \cite{MW2,MH2}  affects the elliptic complex construction of Hitchin's equations. The second step is the proof that $\underline{\EBE}_{\mathcal D}$ is non-empty for any even effective divisor $\mathcal D$. When $|\mathcal D| < 2g-2$ this can be proven directly. In order to treat the general case, we introduce a coordinate-free definition of the small section condition. 

Section 2 contains background material on the moduli space of Higgs bundles. Section 3 introduces the extended Bogomolny equations. Section 4 introduces the asymptotic conditions we require our solutions to satisfy. We discuss the generalized Nahm pole boundary condition and the small section in detail. Section 5 contains the proof of theorem \eqref{Main}. Section 6 discusses generalizations of the result in this paper.

\subsection{Acknowledgements} The author would like to thank Rafe Mazzeo and Richard Wentworth for many illuminating conversations. 

\section{The geometry of Hitchin's moduli space}

\subsection{The Higgs bundle moduli space}

Let $E\rightarrow \Sigma$ be a rank $2$ complex vector bundle over a Riemann surface $\Sigma$ of genus $g\geq 2$. 

\begin{definition}
An $\SL(2,\C)$-Higgs bundle on $\Sigma$ is a pair $(\bar\pa_E, \Phi)$ such that 

\begin{itemize}
\item $\bar\pa_E : \Omega^0(E)\rightarrow \Omega^{0,1}(E)$ is a holomorphic structure on $E$,
\item $\Phi : E\rightarrow E\otimes K$ is holomorphic: $\bar\pa_E \Phi = 0$, and traceless: $\Tr(\Phi) = 0$.
\end{itemize}

\end{definition}
The holomorphic section $\Phi$ is called the Higgs field. We fix the notation $\mathcal E := (E,\bar\pa_E)$ to mean the complex vector bundle together with a given holomorphic structure. Denote by $\mathcal H$ the set of $\SL(2,\C)$-Higgs bundles on $E$. $\mathcal H$ admits an action of the gauge group $\SL(2,E)$ given by

\begin{equation*}
g\cdot (\bar\pa_E,\Phi) = (g^{-1}\circ\bar\pa_E\circ g, g^{-1}\circ\Phi\circ g).
\end{equation*}
Since the space of holomorphic structures on $E$ is an affine space modelled on $\Omega^{0,1}(\mathfrak{sl}(E))$, the tangent space to $\mathcal H$ at $(\bar\pa_E,\Phi)$ is given by pairs $(\beta,\phi) \in \Omega^{0,1}(\mathfrak{sl}(E))\oplus\Omega^{1,0}(\mathfrak{sl}(E))$ such that $\Phi+\phi$ is holomorphic to first order with respect to $\bar\pa_E + \beta$. Concretely, \[ T_{(\bar\pa_E,\Phi)}\mathcal H := \{(\beta,\phi) \in \Omega^{0,1}(\mathfrak{sl}(E))\oplus\Omega^{1,0}(\mathfrak{sl}(E))~|~\bar\pa_E\phi + [\Phi, \beta] = 0\}.\] The space $\mathcal H$ carries a natural complex structure $I$ given by 

\begin{equation}\label{complex structure}
I(\beta, \phi) = (i\beta,i\phi),
\end{equation}
and 

\begin{equation}\label{symplectic structure}
\omega_I^{\C}((\beta_1,\phi_1),(\beta_2,\phi_2)) = i \int\limits_{\Sigma} Tr(\phi_2\wedge \beta_1 - \phi_1\wedge \beta_2)
\end{equation}
defines a holomorphic symplectic form on $\mathcal H$ with respect to $I$.

In order to define the Higgs bundle moduli space we want to consider the quotient of $\mathcal H$ by the action of the gauge group $\SL(2,E)$. In order to obtain a well behaved moduli space we need to restrict the possible Higgs bundles considered. This leads to the notion of stability. 

\begin{definition}
A Higgs bundle $(\bar\pa_E,\Phi)$ is called 

\begin{itemize}
\item \emph{semistable} if for all $\Phi$-invariant line sub-bundles $\mathcal L \subset \mathcal E$, it holds that $\deg(\mathcal L) \le 0$,
\item \emph{stable} if for all $\Phi$-invariant line sub-bundles $\mathcal L \subset \mathcal E$, it holds that $\deg(\mathcal L) < 0$, and 
\item \emph{polystable} if it is semistable and a direct sum of stable Higgs bundles.
\end{itemize}
\end{definition}

Let $\mathcal H^{ps}\subset \mathcal H$ denote the set of polystable Higgs bundles. We define the Higgs bundle moduli space 

\begin{equation*}
\mathcal M_H := \mathcal H^{ps}/\SL(2,E).
\end{equation*}
It is proven by Hitchin \cite{Hitchin} that $\mathcal M^{st}_H \subset \mathcal M_H$, the open subset of equivalence classes of stable Higgs bundles is a smooth manifold of complex dimension $6g-6$. The complex structure $I$ and symplectic form $\omega_I^{\C}$ are invariant under the action of the gauge group and therefore give $\mathcal M^{st}_H$ the structure of a holomorphic symplectic manifold. 

\begin{theorem}[Nonabelian Hodge correspondence]\label{NHC}
A Higgs bundle $(\bar\pa_E,\Phi)$ is polystable if and only if there exists a hermitian metric $h$ on $E$ such that 

\begin{equation}\label{Hitchin}
F_{(\bar\pa_E,h)} + [\Phi,\Phi^{\star_h}] = 0,
\end{equation}
where $F_{(\bar\pa_E,h)}$ is the curvature of the Chern connection $\bar\pa_E + \pa_E^h$ associated to the pair $(\bar\pa_E, h)$ and $\Phi^{\star_h}$ is the adjoint of $\Phi$ with respect to $h$. The equation \eqref{Hitchin} is called Hitchin's equation and the metric $h$ solving \eqref{Hitchin} is reffered to as the \emph{harmonic} metric.
\end{theorem}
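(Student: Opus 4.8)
The plan is to establish the two implications separately, following the arguments of Hitchin \cite{Hitchin} and Simpson's method.

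\emph{A solution of \eqref{Hitchin} forces polystability.} This is the elementary direction and proceeds by a Chern--Weil estimate. Suppose $h$ solves \eqref{Hitchin} and let $\mathcal L\subset\mathcal E$ be a $\Phi$-invariant holomorphic line sub-bundle, with $\pi$ the $h$-orthogonal projection onto $\mathcal L$, orthogonal complement $\mathcal L^{\perp}$, and second fundamental form $\psi\in\Omega^{1,0}(\mathrm{Hom}(\mathcal L^{\perp},\mathcal L))$. The Gauss--Codazzi identity for the induced metric gives $\deg(\mathcal L)=\frac{i}{2\pi}\int_{\Sigma}\Tr\!\big(\pi\,\Lambda F_{(\bar\pa_E,h)}\big)\,dV-\frac{1}{2\pi}\int_{\Sigma}|\psi|^{2}\,dV$. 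Substituting $\Lambda F_{(\bar\pa_E,h)}=-[\Phi,\Phi^{\star_h}]$ from \eqref{Hitchin} and using that $\Phi$ is block upper-triangular with respect to $\mathcal L\oplus\mathcal L^{\perp}$, the curvature integral equals $-\frac{1}{2\pi}\int_{\Sigma}|\eta|^{2}\,dV$, where $\eta$ is the off-diagonal block of $\Phi$; hence $\deg(\mathcal L)\le 0$, with equality forcing $\psi=0$ and $\eta=0$, i.e.\ $\mathcal L$ is a $\Phi$-invariant holomorphic orthogonal summand. Running this dichotomy over all $\Phi$-invariant line sub-bundles yields either stability or a splitting into stable degree-zero Higgs sub-bundles, which is exactly polystability.

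\emph{Polystability produces a solution of \eqref{Hitchin}.} This is the analytic heart of the statement. First I would reduce to the stable case: a polystable but non-stable $(\bar\pa_E,\Phi)$ splits as an orthogonal direct sum of stable Higgs sub-bundles of degree $0$, and the orthogonal direct sum of harmonic metrics on the summands is harmonic, so it suffices to treat a stable Higgs bundle. Fix a background metric $h_0$ inducing the standard metric on $\det E\cong\mathcal O$, and write a competing metric as $h=h_0e^{s}$ with $s$ trace-free and $h_0$-self-adjoint. Introduce the Donaldson functional $M(h_0,h)$, obtained by integrating $\Lambda F_{h_t}+[\Phi,\Phi^{\star_{h_t}}]$ against $h_t^{-1}\dot h_t$ along any path $h_t$ from $h_0$ to $h$; it is path-independent, convex along the geodesics $h_t=h_0e^{ts}$ in the space of metrics, and its critical points are precisely the solutions of \eqref{Hitchin}. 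The goal is then to produce a smooth minimizer.

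The main obstacle, and the crux of the whole correspondence, is the a priori $C^0$ estimate: one must show there exist constants $C_1,C_2$ depending only on $h_0$, $\Phi$ and the stability of the bundle such that $\sup_{\Sigma}|s|\le C_1\,M(h_0,h_0e^{s})+C_2$ for every admissible $s$. I would prove this by contradiction à la Simpson: from a sequence $s_i$ violating it, set $u_i=s_i/\sup_{\Sigma}|s_i|$, extract a nonzero weak $L^2_1$ limit $u_{\infty}$ whose eigenvalues are forced to be constant, and read off from the eigenspace filtration of $u_{\infty}$ a $\Phi$-invariant saturated sub-sheaf of non-negative degree, contradicting stability; this step needs a Moser-iteration argument to upgrade an $L^1$ bound to an $L^{\infty}$ bound, together with a convexity inequality for $M$ and the Chern--Weil identity used in the easy direction. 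Granting the estimate, a minimizing sequence is $C^0$-bounded, hence, since $M$ also controls $\|\bar\pa_E s\|_{L^2}$, bounded in $L^2_1$; a weak limit is a weak solution of \eqref{Hitchin}, and elliptic regularity promotes it to a smooth harmonic metric. As an alternative to the variational route one may instead run Donaldson's heat flow $h^{-1}\partial_t h=-(\Lambda F_h+[\Phi,\Phi^{\star_h}])$, whose short- and long-time existence are standard parabolic theory and whose convergence again rests on the same main estimate. Either way, the destabilizing-subsheaf extraction behind the $C^0$ bound is the only genuinely hard point; everything else is soft analysis or the Chern--Weil computation already carried out above.
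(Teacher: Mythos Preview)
The paper does not prove this theorem; it is quoted as background and attributed to Hitchin \cite{Hitchin}. Your sketch is a faithful outline of the standard Hitchin--Simpson argument (Chern--Weil inequality for the easy direction, Donaldson functional or heat flow plus the destabilizing-subsheaf extraction for the hard direction), so there is nothing to compare against and no gap to flag.
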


Using the harmonic metric $h$ associated to $(\bar\pa_E,\Phi)$ we can define a different complex structure on $T_{(\bar\pa_E,\Phi)}$ which in terms of $(\beta,\phi)$ can be written as \[ J(\beta,\phi) = (i\phi^{\star_h}, -i\beta^{\star_h}).\]
The composition $K = IJ$ defines a third complex structure and it is a theorem of Hitchin \cite{Hitchin} that the three different complex structures give $M_H$ the structure of a hyperk\"ahler manifold.

\subsection{The $\C^{\star}$-action on $\mathcal M_H$}

The Higgs bundle moduli space carries a holomorphic $\C^{\star}$-action given by \[ \xi\cdot[(\bar\pa_E,\Phi)] := [(\bar\pa_E,\xi\Phi)].\]

It follows from the properness of the Hitchin fibration that the limit $\lim\limits_{\xi\to 0}[(\bar\pa_E, \xi\Phi)]$ always exists in $\mathcal M_H$ and is a fixed point of the $\C^{\star}$-action. The following theorem of Hitchin \cite{Hitchin} characterizes fixed points of the action:

\begin{theorem}\label{Fixed points}
A point $[(\bar\pa_E, \Phi)]\in\mathcal M_H$ is a fixed point of the $\C^{\star}$-action if either of the following holds:
\begin{itemize}
\item $\Phi =0$ and $(E,\bar\pa_E)$ is a stable bundle,
\item there is a $C^{\infty}$-splitting $E = \mathcal L \oplus \mathcal L^{\star}$ with deg$(\mathcal L) > 0$ and with respect to which
\begin{equation}
\bar\pa_E = \begin{pmatrix} \bar\pa_{\mathcal L} & 0 \\ 0 & \bar\pa_{\mathcal L^{\star}} \end{pmatrix}~\text{and}~\Phi= \begin{pmatrix} 0 & 0 \\ \Phi_1 & 0 \end{pmatrix},
\end{equation}
\end{itemize}
where $\Phi_1: \mathcal L^{\star} \rightarrow \mathcal L\otimes K$ is holomorphic.
\end{theorem}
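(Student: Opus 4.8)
The plan is to establish the two cases as an \emph{equivalence} with being a $\C^\star$-fixed point (throughout we work in the locus of stable Higgs bundles, which is all that is needed below; the strictly polystable case reduces to this by decomposing into stable summands). The sufficiency direction is immediate: when $\Phi=0$ the point is obviously fixed, and in the second case the diagonal gauge transformation $\mathrm{diag}(\xi^{1/2},\xi^{-1/2})$ --- well defined up to the $\Z/2$ centre, which is all that matters for the conjugation action --- preserves the diagonal $\bar\pa_E$ and sends $\Phi$ to $\xi\Phi$, so $[(\bar\pa_E,\xi\Phi)]=[(\bar\pa_E,\Phi)]$ for all $\xi$. The content is the converse.

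Assume $[(\bar\pa_E,\Phi)]$ is fixed, hence in particular fixed by the circle $S^1=\{e^{i\theta}\}\subset\C^\star$, and let $h$ be its harmonic metric from Theorem~\ref{NHC}. The key point is that $h$ is \emph{simultaneously} harmonic for every $(\bar\pa_E,e^{i\theta}\Phi)$: indeed $(e^{i\theta}\Phi)^{\star_h}=e^{-i\theta}\Phi^{\star_h}$, so $[e^{i\theta}\Phi,(e^{i\theta}\Phi)^{\star_h}]=[\Phi,\Phi^{\star_h}]$ and \eqref{Hitchin} is unchanged. Consequently the automorphism $g_\theta$ of $\mathcal E$ intertwining the isomorphic stable Higgs bundles $(\bar\pa_E,e^{i\theta}\Phi)$ and $(\bar\pa_E,\Phi)$ must, by uniqueness of the harmonic metric for a stable $\SL(2,\C)$-Higgs bundle, be $h$-unitary; so one obtains a smooth one-parameter family $g_\theta\in\mathcal U(E,h)$ of holomorphic automorphisms of $\mathcal E$ with $g_\theta^{-1}\Phi g_\theta=e^{i\theta}\Phi$. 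Differentiating at $\theta=0$ (with $g_0=I$) yields its generator $\psi\in\Omega^0(\mathfrak{sl}(E))$, which is holomorphic ($\bar\pa_E\psi=0$), skew-hermitian with respect to $h$, and satisfies $[\psi,\Phi]=-i\Phi$.

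If $\psi=0$ then $\Phi=0$, and a Higgs bundle $(\bar\pa_E,0)$ is stable precisely when every line sub-bundle of $\mathcal E$ (each trivially $\Phi$-invariant) has negative degree, i.e.\ when $(E,\bar\pa_E)$ is a stable bundle; this is the first case. If $\psi\neq 0$, look at $i\psi$: a nonzero, holomorphic, traceless, $h$-self-adjoint endomorphism of $\mathcal E$. Its characteristic polynomial has holomorphic, hence constant, coefficients on the compact surface $\Sigma$; being self-adjoint, traceless and nonzero, its eigenvalues are two real constants $\pm\lambda$ with $\lambda>0$, and the eigenspace decomposition $E=E_{-\lambda}\oplus E_{+\lambda}$ is a $C^\infty$ splitting into $h$-orthogonal holomorphic sub-bundles; in particular $\bar\pa_E$ is block diagonal. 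In this splitting $\psi$ becomes a constant diagonal matrix, and the weight equation $[\psi,\Phi]=-i\Phi$ forces the diagonal blocks of $\Phi$ to vanish and allows at most one off-diagonal block to be nonzero --- the one matching the weight $-i$ --- which together with $\Phi\neq 0$ pins down $\lambda=\tfrac12$ and shows $\Phi$ is strictly triangular of the stated form, its single surviving block $\Phi_1$ being holomorphic (from $\bar\pa_E\Phi=0$ and the block-diagonality of $\bar\pa_E$) and nonzero. Finally, the summand on which $\Phi$ vanishes is a $\Phi$-invariant line sub-bundle of $\mathcal E$, so stability forces its degree to be negative; its complement, which we call $\mathcal L$ and which is identified with the dual of the other summand via $\det E\cong\mathcal O_\Sigma$, therefore has $\deg\mathcal L>0$. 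This is the second case.

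The main obstacle is the second paragraph: upgrading set-theoretic $S^1$-fixedness --- for each $\theta$, the mere existence of \emph{some} gauge transformation $g_\theta$ --- to a genuine smooth one-parameter family and thence to the generator $\psi$. This is exactly where the harmonic metric and its uniqueness (up to the finite centre of $\SL(2,\C)$) are needed, both to rigidify the choice of $g_\theta$ and to make $\theta\mapsto g_\theta$ a homomorphism up to that finite group; note $g_{2\pi}\in\{\pm I\}$, which is why the eigenvalues of $i\psi$ come out as the half-integers $\pm\tfrac12$ rather than $\pm 1$. Everything after $\psi$ is produced is routine: the linear algebra of weight decompositions, the constancy of holomorphic functions on a compact Riemann surface, and the definition of stability. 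One may also record that nonvanishing of the holomorphic section $\Phi_1$ forces $\deg\mathcal L\le g-1$, so that the $\C^\star$-fixed components with $\Phi\neq 0$ are labelled by $\deg\mathcal L\in\{1,\dots,g-1\}$, consistent with the indexing by even integers in $\{0,\dots,2g-2\}$ (the value $0$ being the $\Phi=0$ locus) used in the introduction.
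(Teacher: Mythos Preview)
Your proof is correct and is essentially Hitchin's original argument. Note, however, that the paper does not give its own proof of this theorem: it is stated as a citation to Hitchin's paper, so there is no ``paper's proof'' to compare against. Your argument --- using that the harmonic metric is invariant under the $S^1$-action, upgrading pointwise gauge equivalences to a one-parameter unitary family via uniqueness of the harmonic metric on a stable Higgs bundle, and then reading off the Hodge splitting from the eigenspaces of the infinitesimal generator --- is the standard one, and all the details (the weight computation forcing $\lambda=\tfrac12$, the identification of the $\Phi$-invariant summand and the stability bound on its degree, the range $1\le\deg\mathcal L\le g-1$) are handled correctly. The one place that requires care, as you note, is the passage from a set-theoretic $S^1$-fixed point to a smooth family $\theta\mapsto g_\theta$; your explanation via uniqueness up to the centre is the right one, and can be phrased cleanly by observing that the possible $g_\theta$ form a double cover of $S^1$, hence a Lie group, so the generator $\psi$ exists.
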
 

\begin{definition}
Given a fixed point of the $\C^{\star}$-action $[(\bar\pa_0,\Phi_0)]\in\mathcal M_H$, the \emph{stable manifold} associated to it is defined as \[ W^0(\bar\pa_0,\Phi_0) := \Bigl\{ [(\bar\pa_E,\Phi)]\in\mathcal M_H ~| ~\lim\limits_{\xi\to 0}[(\bar\pa_E, \xi\Phi)] = [(\bar\pa_0,\Phi_0)]\Bigr\}.\]
\end{definition}

The following theorem on the structure of $W^0(\bar\pa_0,\Phi_0)$ is proved in \cite{CW} :

\begin{theorem}
The stable manifold $W^0(\bar\pa_0,\Phi_0)$ is an $\omega_I^{\C}$ holomorphic Lagrangian of $\mathcal M_H$.
\end{theorem}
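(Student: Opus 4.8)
The plan is to show two things: that $W^0(\bar\pa_0,\Phi_0)$ is an isotropic submanifold for $\omega_I^\C$, and that its dimension is exactly half of $\dim_\C \mathcal M_H = 6g-6$, i.e.\ $3g-3$. Together these give the Lagrangian property. I would first set up the deformation-theoretic description of the tangent space to $W^0(\bar\pa_0,\Phi_0)$ at a point $[(\bar\pa_E,\Phi)]$. The stable manifold is an invariant submanifold for the holomorphic $\C^\star$-action, and its tangent space at a point of $W^0$ is naturally identified with the sum of the non-negative weight spaces of the infinitesimal $\C^\star$-action on $T_{[(\bar\pa_E,\Phi)]}\mathcal M_H$ (this is the standard Bia{\l}ynicki-Birula picture: the stable manifold of a fixed point under a holomorphic torus action is a locally closed submanifold whose tangent space is the non-negative part of the weight decomposition). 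Near a fixed point this is cleanest, and since $\C^\star$ acts on $W^0$ the dimension is constant, so it suffices to compute everything at the fixed point $[(\bar\pa_0,\Phi_0)]$ itself.

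At the fixed point the $\C^\star$-action on the tangent space $H^1$ of the deformation complex
\[
\Omega^0(\mathfrak{sl}(E)) \xrightarrow{(\bar\pa_0,[\Phi_0,\cdot])} \Omega^{0,1}(\mathfrak{sl}(E))\oplus\Omega^{1,0}(\mathfrak{sl}(E)) \xrightarrow{\bar\pa_0 + [\Phi_0,\cdot]} \Omega^{1,1}(\mathfrak{sl}(E))
\]
inherits a weight grading from the weights of $\C^\star$ acting on $\mathfrak{sl}(E)$ via the fixed-point splitting $E = \mathcal L\oplus\mathcal L^\star$, twisted by the weight of $\Phi$ in the second factor $\Omega^{1,0}$. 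One decomposes $\mathfrak{sl}(E)$ into the diagonal part (weight $0$), the strictly lower-triangular part (weight $-2$, say), and the strictly upper-triangular part (weight $+2$), and tracks how $(\beta,\phi)$ with $\phi$ carrying an extra $+1$ from $\xi\Phi$ decomposes. Computing the dimensions of the weight spaces of $H^1$ via Riemann--Roch for the relevant line bundles (powers of $\mathcal L^2 K^{-1}$ tensored appropriately) and showing the non-negative weight spaces account for exactly $3g-3$ is the bookkeeping heart of the argument; the key input is that the sum over all weights is $6g-6$ and there is a symmetry pairing weight $w$ with weight $-w+(\text{shift})$ coming from Serre duality, which forces the non-negative part to be half-dimensional.

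For isotropy, the cleanest route is to use that $\omega_I^\C$ has a definite weight under the $\C^\star$-action: since $\omega_I^\C((\beta_1,\phi_1),(\beta_2,\phi_2)) = i\int_\Sigma \Tr(\phi_2\wedge\beta_1 - \phi_1\wedge\beta_2)$ is linear in each $\phi_i$ and $\C^\star$ scales $\Phi$ (hence $\phi$) with weight $1$ while fixing $\beta$, the form $\omega_I^\C$ has weight $1$. Therefore $\omega_I^\C$ pairs the weight-$w$ subspace with the weight-$w'$ subspace trivially unless $w + w' = -1$ (or whatever the convention makes it); in particular any two non-negative weight vectors pair to something of weight $\ge 0 > -1$ under the scaling, so the pairing restricted to $T W^0$, which lies in non-negative weights, is forced to vanish once one checks that no pair of non-negative weights sums to the required value. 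Hence $W^0$ is isotropic, and combined with the dimension count it is Lagrangian. The main obstacle I anticipate is the precise Riemann--Roch weight-space dimension count in the second paragraph: getting the line-bundle degrees right in each weight sector (and handling the $|\mathcal D|\ge 2g-2$ versus $<2g-2$ distinction, or more precisely the sign of $\deg\mathcal L$) without an off-by-one, and making the Serre-duality symmetry that halves the total dimension fully rigorous rather than heuristic. A secondary subtlety is justifying that $W^0$ is genuinely a smooth submanifold with the claimed tangent space near non-fixed points — for this one invokes the general Bia{\l}ynicki-Birula theorem for the holomorphic $\C^\star$-action on the (smooth, quasi-projective) moduli space $\mathcal M_H^{st}$, together with properness of the Hitchin map to ensure the limits exist.
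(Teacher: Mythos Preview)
The paper does not give its own proof of this theorem; it is quoted from \cite{CW}, and your outline is essentially the argument there: Bia{\l}ynicki--Birula supplies the smooth structure of $W^0$ and identifies its tangent space at the fixed point with a sum of weight spaces of the deformation complex, Riemann--Roch on the resulting line bundles gives the half-dimension $3g-3$, and the homogeneity $\xi^*\omega_I^\C = \xi\,\omega_I^\C$ forces isotropy.

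A closely related computation does appear in Section~5.1 of the paper, where the tangent space to $\underline{\EBE}_{\mathcal D}$ at a solution is shown to be a holomorphic Lagrangian in $T\mathcal M_H$. The route there is more direct than yours: instead of invoking weights, the paper writes down the explicit subcomplex $C_+(\bar\pa_E,\Phi)$ with middle term $\Omega^{0,1}(N_+)\oplus\Omega^{1,0}(L\oplus N_+)$, deforms the Higgs field to zero so that the index operator decouples into two $\bar\pa$-operators, and applies Riemann--Roch to each piece. Isotropy is checked by plugging such $(\beta,\phi)$ directly into the formula for $\omega_I^\C$ and observing that the trace of a product of elements of $N_+$ and $L\oplus N_+$ vanishes. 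Your weight-homogeneity argument is the conceptual repackaging of the same computation; the paper's version has the advantage of working uniformly at every point of $W^0$, not only at the fixed point, without a separate appeal to the Bia{\l}ynicki--Birula theorem.

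Two small corrections to your sketch. First, the tangent space to $W^0(\bar\pa_0,\Phi_0)$ at the fixed point is the strictly \emph{positive} weight part, not the non-negative part: since $W^0$ is defined as the set of points flowing to one \emph{specific} fixed point, the weight-zero directions tangent to the fixed-point component are excluded. This is exactly the normal-form proposition quoted in the paper: $\beta\in\Omega^{0,1}(N_+)$ and $\phi\in\Omega^{1,0}(L\oplus N_+)$, carrying weights $1$ and $1,2$ respectively. With $\omega_I^\C$ of weight $+1$ (not $-1$), no two such weights sum to $1$, and isotropy follows. Second, your worry about the sign of $\deg\mathcal L$ is misplaced for this particular theorem: $W^0(\bar\pa_0,\Phi_0)$ is only defined when $(\bar\pa_0,\Phi_0)$ is a genuine $\C^\star$-fixed point of the nilpotent type, which forces $\deg\mathcal L>0$. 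The $\deg\mathcal L\le 0$ dichotomy is relevant to the paper's main theorem, not to this cited one.
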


Before we state the next proposition \cite[Proposition 4.2]{CW}, let us introduce some notation. Let 

\begin{equation}
N_+ := Hom(\mathcal L, \mathcal L^{\star})~,~L := \left(Hom(\mathcal L, \mathcal L)\oplus Hom(\mathcal L^{\star}, \mathcal L^{\star})\right)\cap\mathfrak{sl}(E).
\end{equation}

\begin{proposition}\label{normal form}
$[(\bar\pa_E,\Phi)]\in W^0(\bar\pa_0,\Phi_0)$ if and only if after a gauge transformation 
\[\Phi - \Phi_0 \in \Omega^{1,0}(L\oplus N_+)~\text{and}~\bar\pa_E - \bar\pa_0 \in \Omega^{0,1}(N_+).\]
\end{proposition}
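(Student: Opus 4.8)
The plan is to set up coordinates adapted to the $C^\infty$-splitting $E = \mathcal L \oplus \mathcal L^\star$ that diagonalizes the fixed point $(\bar\pa_0,\Phi_0)$, and then to track what the limit condition $\lim_{\xi\to 0}[(\bar\pa_E,\xi\Phi)] = [(\bar\pa_0,\Phi_0)]$ forces on the off-diagonal and diagonal components. Write $\mathfrak{sl}(E) = N_+ \oplus L \oplus N_-$ where $N_- := \mathrm{Hom}(\mathcal L^\star,\mathcal L)$ (so $\Phi_0 \in \Omega^{1,0}(N_-)$ by Theorem~\ref{Fixed points}), and decompose $\bar\pa_E - \bar\pa_0 = \beta_+ + \beta_0 + \beta_-$ and $\Phi - \Phi_0 = \phi_+ + \phi_0 + \phi_-$ accordingly. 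The $\C^\star$-action acts on these blocks with definite weights: conjugation by $\mathrm{diag}(\xi^{1/2},\xi^{-1/2})$ together with the scaling $\Phi \mapsto \xi\Phi$ multiplies the $N_+$ blocks and the $N_-$ blocks by different powers of $\xi$, with $N_+$ being the contracting direction. The content of the proposition is then exactly that the stable manifold is cut out by the vanishing of the expanding and neutral-but-unstable directions, up to gauge.

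The first step I would carry out is the ``only if'' direction. Given $[(\bar\pa_E,\Phi)] \in W^0(\bar\pa_0,\Phi_0)$, I would combine the $\C^\star$-scaling with a one-parameter family of complex gauge transformations $g_\xi = \mathrm{diag}(\xi^{-1/2},\xi^{1/2})$ (the standard trick: the abstract limit in $\mathcal M_H$ can be realized, after applying suitable gauge transformations, by an explicit limit of representatives). Acting by $g_\xi$ on $(\bar\pa_E,\xi\Phi)$ rescales each block by a power of $\xi$; the requirement that this converge as $\xi \to 0$ to $(\bar\pa_0,\Phi_0)$ forces the components living in non-contracting weight spaces — precisely $\beta_-$, $\beta_0$, and $\phi_-$ — to vanish, while leaving $\phi_0, \phi_+, \beta_+$ unconstrained. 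This yields, after the gauge transformation, $\bar\pa_E - \bar\pa_0 \in \Omega^{0,1}(N_+)$ and $\Phi - \Phi_0 \in \Omega^{1,0}(L \oplus N_+)$. One must check that the limiting pair is gauge-equivalent to $(\bar\pa_0,\Phi_0)$ and not merely to something in its gauge orbit closure — here stability/polystability of the fixed point and the characterization in Theorem~\ref{Fixed points} are used to rule out jumping to a strictly less stable object.

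For the ``if'' direction, suppose after a gauge transformation $\bar\pa_E = \bar\pa_0 + \beta_+$ and $\Phi = \Phi_0 + \phi_0 + \phi_+$ with $\beta_+ \in \Omega^{0,1}(N_+)$, $\phi_0 \in \Omega^{1,0}(L)$, $\phi_+ \in \Omega^{1,0}(N_+)$. Applying $g_\xi = \mathrm{diag}(\xi^{-1/2},\xi^{1/2})$ to $(\bar\pa_E, \xi\Phi)$ and using that $N_+$ is the contracting eigenspace, one sees directly that $g_\xi\cdot(\bar\pa_E,\xi\Phi) \to (\bar\pa_0,\Phi_0)$ in the $C^\infty$ topology on representatives (the $\phi_0$ term stays bounded and the Higgs-field rescaling kills it; the $\beta_+,\phi_+$ terms decay), hence the limit in $\mathcal M_H$ is $[(\bar\pa_0,\Phi_0)]$, so $[(\bar\pa_E,\Phi)] \in W^0(\bar\pa_0,\Phi_0)$. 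I expect the main obstacle to be the first direction: passing from the \emph{abstract} convergence of gauge-equivalence classes in $\mathcal M_H$ to an \emph{explicit} convergent family of representatives obtained by a controlled (complex) gauge transformation. This requires either invoking the Kempf--Ness / Kobayashi--Hitchin picture to replace the $\C^\star$-orbit by a gradient flow line for the moment map and using properness of the Hitchin fibration to get compactness, or a direct argument that the Jordan--Hölder/Harder--Narasimhan type of the limit matches that of the fixed point — the bookkeeping of which weight spaces survive is then routine, but justifying the reduction to explicit representatives is the real work, and is presumably where the argument in \cite{CW} is cited.
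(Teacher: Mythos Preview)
The paper does not prove this proposition: it is quoted verbatim as \cite[Proposition 4.2]{CW} and stated without argument, so there is no proof in the present paper to compare your attempt against.

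That said, your outline is the standard Bia{\l}ynicki--Birula argument and is essentially the route taken in \cite{CW}. The ``if'' direction is exactly as you describe: conjugating $(\bar\pa_E,\xi\Phi)$ by the diagonal one-parameter subgroup $g_\xi=\mathrm{diag}(\xi^{-1/2},\xi^{1/2})$ produces an explicit family of representatives converging to $(\bar\pa_0,\Phi_0)$. For the ``only if'' direction you have correctly located the real difficulty, namely promoting convergence of gauge-equivalence classes in $\mathcal M_H$ to convergence of well-chosen representatives; this is precisely the step that requires the machinery of \cite{CW}. One correction to your weight bookkeeping: the diagonal component $\beta_0$ of $\bar\pa_E-\bar\pa_0$ has weight zero under conjugation by $g_\xi$ and is \emph{not} killed by the rescaling; it survives in the limit. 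Its vanishing (up to gauge) is instead forced a posteriori by the requirement that the limiting holomorphic structure agree with $\bar\pa_0$, since the holomorphic structures on $\mathcal L$ and $\mathcal L^\star$ are fixed. With that adjustment your sketch matches the argument in the cited reference.
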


\section{The extended Bogomolny equations}

\subsection{Hermitian geometry}

Let $\Sigma$ denote a Riemann surface with local holomorphic coordinate $z = x_2 +ix_3$ and $E$ an $SU(2)$-bundle over $\Sigma\times\R_y^+$. If $A$ is a connection on $E$, $\phi$ an ad$(E)$-valued $1$-form and $\phi_1$ a scalar field, then the extended Bogomolny equations take the form 

\begin{equation}\label{EB1}
\begin{split}
F_A - \phi\wedge\phi  &= \star\,d_A\phi_1\\
\,d_A\phi + \star[\phi,\phi_1] &= 0\\
\,d_A^{\star}\phi &= 0.
\end{split}
\end{equation}
Here $\star$ is the extension of the Hodge star  to $\Omega^{\star}(\text{ad}(E))$ on $\Sigma\times\R_{+,y}$ equipped with a fixed product metric $g = g_0^2|\,dz|^2 + \,dy^2$. In order to do this we need to choose a Hermitian metric $H$ on $E$ representing the $\SU(2)$-structure. 

It was observed in \cite{WFivebranes, GW} that under the assumption that $\phi_y = 0$, the extended Bogomolny equations have a Hermitian Yang-Mills structure. The boundary and asymptotic conditions that we will impose at $y = 0$ and $y\to\infty$ when we study solutions to the EBE force  $\phi_y = 0$ as shown in \cite{HeGl}. In order to make the Hermitian Yang-Mills structure apparent, we re-write the equations \eqref{EB1} in terms of the operators 

\begin{equation}
\begin{split}
\mathcal D_1 &= (D_2 + iD_3)\,d\bar z = (\pa_{x_2} + i\pa_{x_3} + [A_2+iA_3, \cdot ])\,d\bar z\\
\mathcal D_2 &= [\phi_2 - i\phi_3, \cdot ]\,dz = [\phi_z,\cdot]\,dz\\
\mathcal D_3 &= D_y - i[\phi_1, \cdot ] = \pa_y + [A_y - i\phi_1, \cdot ]\\
\end{split}
\end{equation}
The equations \eqref{EB1} take the form 

\begin{equation}\label{EB2}
\begin{split}
[\mathcal D_i,\mathcal D_j] = 0,\, i,j = 1,\,2,\,&3,\\
\frac{i}{2} \Lambda\left( [\mathcal D_1 , \mathcal D_1^{\dagger_H}] + [\mathcal D_2 , \mathcal D_2^{\dagger_H}]\right) &+ [\mathcal D_3 , \mathcal D_3^{\dagger_H}] = 0,
\end{split}
\end{equation}
where $\Lambda:\Omega^{1,1} \to \Omega^0$ is the inner product with the K\"ahler form (normalized as $\frac{i}{2}\,dz\wedge\,d\bar z$ when the metric on $\Sigma$ is flat). The adjoints are taken with respect to $H$ and the pairing \[ (\alpha, \beta) \rightarrow \int\limits_{\Sigma\times\R^+} \alpha\wedge\star\bar\beta.\]

\begin{lemma}
Assume that the data $(E,\mathcal D_1,\mathcal D_2,\mathcal D_3, H)$ satisfy \eqref{EB2}. Then so do the data $(E, g^{-1}\circ \mathcal D_1 \circ g, g^{-1}\circ \mathcal D_2 \circ g, g^{-1}\circ \mathcal D_3 \circ g, g^{\dagger}Hg)$ .
\end{lemma}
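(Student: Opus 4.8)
The plan is to show that the full system \eqref{EB2} is invariant under the replacement $(\mathcal D_1,\mathcal D_2,\mathcal D_3,H)\mapsto(\mathcal D_1',\mathcal D_2',\mathcal D_3',H')$, where $\mathcal D_i':=g^{-1}\circ\mathcal D_i\circ g$ and $H':=g^{\dagger}Hg$. The two ingredients are: (i) the map $A\mapsto g^{-1}\circ A\circ g$ is an automorphism of the algebra of operators on $\mathrm{ad}(E)$–valued forms, so it preserves the holomorphicity relations which involve no metric; and (ii) the real (moment–map type) equation in \eqref{EB2} has been arranged precisely so that the Hermitian adjoints $\mathcal D_i^{\dagger_H}$ also transform by this same conjugation once $H$ is replaced by $H'$.

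First I would dispose of the holomorphicity equations: since these are algebraic in the $\mathcal D_i$ and do not see the metric, $[\mathcal D_i',\mathcal D_j']=g^{-1}\circ[\mathcal D_i,\mathcal D_j]\circ g=0$ for all $i,j$, so the first line of \eqref{EB2} holds for the primed data. The one genuine computation is the transformation law for the adjoint. For the pointwise Hermitian pairing one has $\langle s,t\rangle_{H'}=\langle gs,gt\rangle_H$, hence the analogous identity for the global pairing $(\alpha,\beta)\mapsto\int_{\Sigma\times\R^+}\alpha\wedge\star\bar\beta$ twisted by the metric. Using the defining property of $\mathcal D_i^{\dagger_H}$ one then obtains
\[
\langle \mathcal D_i' s,\,t\rangle_{H'}=\langle g\,\mathcal D_i' s,\,gt\rangle_H=\langle \mathcal D_i(gs),\,gt\rangle_H=\langle gs,\,\mathcal D_i^{\dagger_H}(gt)\rangle_H=\langle s,\,g^{-1}\!\circ\mathcal D_i^{\dagger_H}\!\circ g\;t\rangle_{H'},
\]
so that $(\mathcal D_i')^{\dagger_{H'}}=g^{-1}\circ\mathcal D_i^{\dagger_H}\circ g$. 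Only the formal adjoint is used here, so no boundary contributions at $y=0$ or $y\to\infty$ enter; those are a separate matter handled when the asymptotic conditions are imposed.

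Finally I would assemble the real equation. Because the contraction $\Lambda$ acts only on the form degrees while conjugation by $g$ acts only on the $\mathrm{ad}(E)$–factor, the two operations commute, whence
\[
\tfrac{i}{2}\Lambda\!\big([\mathcal D_1',(\mathcal D_1')^{\dagger_{H'}}]+[\mathcal D_2',(\mathcal D_2')^{\dagger_{H'}}]\big)+[\mathcal D_3',(\mathcal D_3')^{\dagger_{H'}}]=g^{-1}\!\circ\!\Big(\tfrac{i}{2}\Lambda\big([\mathcal D_1,\mathcal D_1^{\dagger_H}]+[\mathcal D_2,\mathcal D_2^{\dagger_H}]\big)+[\mathcal D_3,\mathcal D_3^{\dagger_H}]\Big)\!\circ g=0,
\]
which is exactly the second line of \eqref{EB2} for the primed data, proving the lemma. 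There is no serious obstacle in this argument; the only point requiring care is the middle step — checking that the adjoint of the conjugated operator with respect to the conjugated metric is again the conjugate of the original adjoint — together with the bookkeeping that $g$ acts by conjugation on $\mathrm{ad}(E)$, acts trivially on form degrees, commutes with $\Lambda$, and is assumed invertible and regular enough to legitimize the manipulations above.
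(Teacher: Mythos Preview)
Your argument is correct: conjugation by $g$ manifestly preserves the commutator relations, and the one substantive point---that $(\mathcal D_i')^{\dagger_{H'}}=g^{-1}\circ\mathcal D_i^{\dagger_H}\circ g$ follows from $\langle s,t\rangle_{H'}=\langle gs,gt\rangle_H$---is handled cleanly, after which the moment-map equation is a conjugate of the original. The paper does not actually give a proof of this lemma; it simply cites \cite{PD1}, so your direct verification supplies what the paper omits.
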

The above lemma from \cite{PD1} shows how to move between different gauges. We study equations \eqref{EB2} under two different gauge fixing conditions, each one having its own benefits. \\

The first is called holomorphic gauge. Notice that assuming a solution to \eqref{EB2} is smooth on $\Sigma\times(0,\infty)_y$, there exists a gauge transformation such that $g^{-1}\circ \mathcal D_3 \circ g = \pa_y$. Then the equations $[\mathcal D_1, \mathcal D_3] = [\mathcal D_2, \mathcal D_3] = 0$ imply that in this gauge $\mathcal D_1$ and $\mathcal D_2$ do not depend on $y$. Let $E_y$ be the restriction of $E$ to the slice $\Sigma\times\{y\}$. $\mathcal D_1$ in the above gauge is a $\bar\pa_{E_y}$-operator satisfying $\mathcal D_1^2 = 0$ and therefore by the Newlander-Nirenberg theorem, it defines a holomorphic structure $\mathcal E_y$ on $E_y$. Now, if we denote the $K_{\Sigma}$-valued endomorphism $\mathcal D_2$ by $\Phi_y$ on $E_y$, the equation $[\mathcal D_1, \mathcal D_2] = 0$ implies that in this gauge $(\mathcal D_1,\mathcal D_2,\mathcal D_3) = (\bar\pa_E, \Phi, \pa_y)$ where the first two entries are the data of a Higgs bundle on $E_y$. In this gauge the last equation in \eqref{EB2} takes the form 

\begin{equation}\label{moment map}
F_{(\bar\pa_E, H)} - g_0^2\pa_y(H^{-1}\pa_y H) + [\Phi,\Phi^{\star_H}] = 0.
\end{equation}

\begin{remark}
Notice that Hitchin's equation \eqref{Hitchin} can be obtained from \eqref{moment map} by requiring that the solution be $y$-independent.
\end{remark}

The second gauge that we use is called unitary gauge. Assuming that we have a solution $(E,\bar\pa_E,\Phi,\pa_y, H)$ to equations \eqref{EB2} in holomorphic gauge, one can express the metric $H = g^{\dagger}g$ for a complex gauge transformation. Then $(E, g\circ \bar\pa_E \circ g^{-1}, g\circ \Phi \circ g^{-1}, g\circ \pa_y \circ g^{-1}, (g^{\dagger})^{-1}Hg^{-1})$ is also a solution. In this gauge the connection and fields $(A,\phi,\phi_1)$ take the following form as functions of $g$:

\begin{equation}\label{unitary gauge}
\begin{split}
A_{\bar z} &= -(\bar\pa g)g^{-1},~A_z = (g^{\dagger})^{-1}\pa g^{\dagger}\\
\phi_z &= g\varphi g^{-1},~\phi_{\bar z} = (g^{\dagger})^{-1}\varphi^{\dagger}g^{\dagger}\\
A_y &= \frac{1}{2}((\pa_y g)g^{-1} - (g^{\dagger})^{-1}\pa_yg^{\dagger})\\
\phi_1 &= \frac{i}{2}((\pa_y g)g^{-1} + (g^{\dagger})^{-1}\pa_yg^{\dagger}).
\end{split}
\end{equation}
It is straightforward that in this gauge the connection and fields are unitary.

\section{Boundary conditions}\label{Boundary conditions}

As explained in the introduction, we want to study solutions to the EBE on $\Sigma\times\R_+$ satisfying certain geometrically motivated asymptotic conditions at $y = 0$ and as $y \to \infty$. We require that the triple $(A, \phi, \phi_1)$ converges to a flat irreducible $\SL(2,\C)$-connection as $y\to\infty$ and satisfies the generalized Nahm pole boundary condition \cite{MW2} as $y\to 0$ . The first condition is straightforward to analyze. It follows from \eqref{NHC} that it is equivalent to the metric $H$ converging to a solution of Hitchin's equation \eqref{Hitchin} as $y\to \infty$ and equivalently to the Higgs bundle $(\bar\pa_E,\Phi)$ appearing in the holomorphic gauge picture to be stable. This is also the reason we require the genus of $\Sigma$ to satisfy $g\ge 2$. 

\subsection{The generalized Nahm pole boundary condition}

In order to explain the asymptotic condition we impose at $y = 0$, we need to introduce model solutions to equation \eqref{moment map}. We start with analyzing the field $\varphi = \varphi_z\,dz$. Working in parallel holomorphic gauge, the equations $[\mathcal D_1, \mathcal D_2] = [\mathcal D_3,\mathcal D_2] = 0$ imply that the matrix $\varphi_z$ has holomorphic entries. We make the simplifying assumption that this matrix is nilpotent. This implies that there exists a holomorphic gauge transformation $g$ that takes this matrix to 
\begin{equation}\label{Jordan canonical}
\varphi_z = \begin{pmatrix} 0 & P(z) \\ 0 & 0 \end{pmatrix},
\end{equation}
where $P(z)$ is holomorphic. We restrict to holomorphic functions that have at most polynomial growth as $r \to \infty$, which by Liouville's theorem must then be polynomials. For the model solutions we let $P(z) = z^k$ for $k\in\N$. 

To solve \eqref{moment map} we also need to choose an ansatz for $H$. The simplest possible ansatz is given by 
\begin{equation}\label{ansatz}
H_k = \begin{pmatrix} e^{-u_k} & 0 \\ 0 & e^{u_k} \end{pmatrix}.
\end{equation}
Since it is natural to expect that a solution with $P(z) = z^k$ be rotationally symmetric around the axis $z = 0$, we assume that $u_k$ is a function of $r:= |z|$ and $y$. The unitary triple corresponding to this data is given by 
\begin{equation}\label{unitary triple}
\begin{split}
A_k &= r\pa_r u_k(r,y)\begin{pmatrix} \frac{i}{2} & 0 \\ 0 & -\frac{i}{2} \end{pmatrix}\,d\theta\\
\phi_{z,k} &= e^{-u_k}\varphi_z = e^{-u_k}z^k\begin{pmatrix} 0 & 1 \\ 0 & 0 \end{pmatrix}\\
\phi_{1,k} &= \pa_y u_k(r,y)\begin{pmatrix} \frac{i}{2} & 0 \\ 0 & -\frac{i}{2} \end{pmatrix}.
\end{split}
\end{equation}

Inserting these $\varphi_z$ and $H$ into \eqref{moment map}, we obtain the equation
\begin{equation}\label{model equation}
\left(\frac{\pa^2}{\pa x_2^2}+\frac{\pa^2}{\pa x_3^2}+\frac{\pa^2}{\pa y^2}\right)u_k + r^{2k}e^{-2u_k} = 0.
\end{equation}
We look for solutions $u_k$ which are regular on $\C\times(0,\infty)_y$ and which blow up in a specific way as $y\to 0$ away from $r = 0$. Let us elaborate on this. Consider the special case where $k = 0$. In this case we consider solutions which only depend on $y$ and hence need to solve the equation
\begin{equation}
u_0'' + e^{-2u_0} = 0.
\end{equation}
There is a two-parameter family of solutions: 
\begin{equation}\label{model y}
\begin{split}
u_0(y) &= \text{log}\left(\frac{\sinh(b(y+c))}{b}\right)\text{~when~}b\neq0,\\
u_0(y) &= \text{log}(y+c)\text{~when~}b = 0.
\end{split}
\end{equation}
We are looking for solutions which are singular at $y\to 0$ so we set $c = 0$. Regardless of whether $b = 0$ or not, the solution $u_0(y) \sim \text{log}(y)$ as $y\to 0$. 

\begin{remark}
The solutions corresponding to $b\neq 0$ force $\phi_1$ to tend to a non-zero traceless matrix as $y\to \infty$. Solutions with this asymptotic condition are called real symmetry breaking solutions. Unless $k = 0$, the existence of model solutions of this form has not been rigorously proven. For interesting predictions and the possible geometric significance of these solutions see section $2.4$ in \cite{GW}. 
\end{remark}

To treat the case $k >0$, we must specify the asymptotic conditions for the model solution. Away from $z = 0$, the field 
\begin{equation}
\varphi_z = \begin{pmatrix} 0 & z^k \\ 0 & 0 \end{pmatrix} 
\end{equation}
is gauge equivalent to the field corresponding to $k = 0$ through the holomorphic gauge transformation 
\begin{equation}
g = \begin{pmatrix} e^{-ik\theta/2} & 0 \\ 0 & e^{ik\theta/2} \end{pmatrix}.
\end{equation}
Thus, as $y\to 0$ away from $z = 0$, we want $u_k$ to be gauge equivalent to $u_0$ and hence require the asymptotic condition that $u_k \sim \text{log}(y)$ as $y\to 0$. We also require that $u_k$ is smooth on $\C\times (0,\infty)$. To find an explicit solution, define 
\begin{equation}
v_k = u_k - (k+1)\text{log}(r). 
\end{equation}
The point of this transformation is to make the differential equation \eqref{model equation} homogeneous of order $-2$. Indeed equation \eqref{model equation} takes the form 
\begin{equation}
\Delta v_k + r^{-2}e^{-2v_k} = 0.
\end{equation}
This equation is scale invariant so it is reasonable to look for solutions which respect this symmetry. We thus consider $v_k$ as a function of $\sigma = y/r$. Equation \eqref{model equation} can be further reduced to 
\begin{equation}
(\sqrt{\sigma^2+1}\,\pa_{\sigma})^2 v_k + e^{-2v_k} = 0.
\end{equation}
Under the change of variables $\sigma = \sinh(\tau)$ the equation becomes
\begin{equation}
v_k'' + e^{-2v_k} = 0.
\end{equation}
Solutions to this equation which are singular along $y = 0$ are given by \eqref{model y} with $y$ being replaced by $\tau$. However, since we require $u_k$ to be regular away from the boundary, $v_k$ must be asymptotic to $-(k+1)\text{log}(r)$ along the positive $z$-axis. This condition is satisfied iff $b = k+1$ in which case the solution takes the form 
\begin{equation}
v_k = \text{log}\left(\frac{\sinh((k+1)\tau)}{k+1}\right).
\end{equation}
Going back to $u_k$ and the variable $\sigma$, we get the model solution 
\begin{equation}
e^{u_k} =\frac{(\sqrt{r^2+y^2} +y)^{k+1} - (\sqrt{r^2 + y^2} - y)^{k+1}}{2(k+1)}.
\end{equation}

We are in a position to describe the boundary conditions for a general solution to \eqref{EB2}. We give definitions both for the unitary triple $(A,\phi,\phi_1)$ and for the metric $H$. In the following, we use the variable $\psi = \tan^{-1}(r/y)$.

\begin{definition}
The unitary triple $(A,\phi,\phi_1)$ satisfies the Nahm pole boundary condition with knot singularity of charge $k$ at $(p,0)\in \C\times \R^+$ if, in some gauge, 
\begin{equation}
(A,\phi_z,\phi_1) = (A_k,\phi_{z,k},\phi_{1,k}) + \mathcal O(\rho^{-1+\epsilon}\psi^{-1+\epsilon})
\end{equation}
for some $\epsilon >0$.

We say that a hermitian metric $H$ satisfies the Nahm pole boundary condition with knot singularity of charge $k$ at $(p,0)\in \C\times \R^+$ if there exists a section $s\in i\mathfrak{su}(E,H_k)$ such that $H = H_ke^s$ and $|s| + |y\,ds| \le C\rho^{\epsilon}\psi^{\epsilon}$ for some $\epsilon>0$.
\end{definition}

\subsection{The holomorphic line bundle}

Suppose that the triple $(A,\phi_z,\phi_1)$ satisfies the extended Bogomolny equations \eqref{EB2} and the asymptotic conditions described above. We further require the set of points $(p,0)\in \Sigma\times \R^+$ at which the solution is up to local holomorphic gauge asymptotic to a positive charge model solution to be finite. We denote the collection of positively charged points together with their multiplicities as $\mathcal D$. Near each point on the boundary not in $\mathcal D$ there exists a local homolorphic frame in which 
\begin{equation}
\phi_{1,0} = \frac{1}{2y}\begin{pmatrix} i & 0 \\ 0 & -i \end{pmatrix} + \mathcal O(\rho^{-1+\epsilon}\psi^{-1+\epsilon}).
\end{equation}
Let $s_1$ and $s_2$ be local holomorphic coordinates for the bundle $E$ so that $s_1$ corresponds to $(1,0)^{\dagger}$ and $s_2$ to $(0,1)^{\dagger}$. Consider a generic section $s = a_1s_1 + a_2s_2$ of the bundle $E|_{y=1}$ and parallel transport it using the operator $\mathcal D_3$. Then the equation $\mathcal D_3 s = \pa_y s - i\phi_1 s = 0$ implies that as $y\to 0$ 
\begin{equation}
s(y) = (a_1y^{-1/2} + \mathcal O(y^{-1/2+\epsilon}))s_1 + (a_2y^{1/2} + \mathcal O(y^{1/2+\epsilon}))s_2.
\end{equation}
Notice that the section $s_2$ is special since its parallel transport vanishes like $y^{1/2}$ as $y\to 0$, whereas the parallel transport of a generic section actually blows up like $y^{-1/2}$. This is called the small section and we obtain an invariant description of the vanishing line bundle 

\begin{equation}
\mathcal L := \{ s\in \Gamma(E)|_{(\Sigma-\mathcal D)\times\R_+}  : \mathcal D_3 s = 0, \lim\limits_{y\to 0} |y^{-1/2+a}s| = 0\}
\end{equation}
for all $0<a<1$. Since $\mathcal L$ is locally spanned by the holomorphic section $s_2$ it is holomorphic. Notice that this line bundle is defined away from the divisor $\mathcal D$. In order to show that it does in fact extend over these points, repeat the same argument as above with $\phi_{1,k}$ in position of $\phi_{1,0}$. Let $(r_j,y)$ be local cylindrical coordinates around each of the positively charged points and $\psi_j:= \tan^{-1}(r_j/y)$.

It turns out that around each positively charged point $p_j$, while a generic section blows up like $\psi_j^{-1/2}$ when parallel transported using $\mathcal D_3$, there is a distinguished section which in fact vanishes like $\psi_j^{1/2}$. Here $\psi_j$ is just the polar angle at $p_j$. Thus, we can obtain a globally defined holomorphic line sub-bundle in the following way. Let $U_{\mathcal D}$ be a union of open half balls containing $\mathcal D$. Then 

\begin{equation}
\begin{split}
\mathcal L := \{ s\in \Gamma(E) : \mathcal D_3 s = 0, &\lim\limits_{y\to 0} |y^{-1/2+a}s| = 0~\text{on}~ \Sigma\times\R_+ - U_{\mathcal D}\\
\text{and}~&\lim\limits_{\psi_j\to 0} |\psi_j^{-1/2+a}s| = 0~\text{on}~ U_{\mathcal D}\}.
\end{split}
\end{equation}

\section{Moduli structure of solutions to EBE on $\Sigma\times\R_+$}

\subsection{The tangent space} 

We start by briefly recalling the work of \cite{MH2}. In this paper, the authors classify solutions to the extended Bogomolny equations on $\Sigma\times\R_+$ with the asymptotic conditions of the previous section, namely
\begin{itemize}
\item As $y\to 0$ the solution should satisfy the GNPB condition for a given divisor $\mathcal D \subset \Sigma\times\{0\}$, 
\item As $y\to\infty$ the solution should converge to a $y$-independent solution of \eqref{EB2}.
\end{itemize}
Working in parallel holomorphic gauge, requiring that the solution converge to a $y$-independent solution as $y\to\infty$ implies that the metric $H$ should converge to a solution of the equation 
\begin{equation}
F_{(\bar\pa_E, H)} + [\Phi, \Phi^{\star_H}] = 0
\end{equation}
which is Hitchin's equation \eqref{Hitchin}. 
Denoting the \emph{set} of solutions satisfying the above asymptotics $\underline{\EBE}$, the main theorem in \cite{MH2} can be stated as follows:
\begin{theorem}
There exists a bijective correspondence between $\underline{\EBE}$ and holomorphic triples $(\bar\pa_E, \Phi, \mathcal L)$ where $(\bar\pa_E,\Phi)$ is a stable Higgs pair and $\mathcal L \hookrightarrow E$ is a holomorphically embedded line bundle. 
\end{theorem}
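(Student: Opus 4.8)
The plan is to set up a Kobayashi–Hitchin–type argument relating solutions of the extended Bogomolny equations to holomorphic data. Starting from a solution of \eqref{EB2}, the first step is to pass to holomorphic gauge on $\Sigma\times(0,\infty)_y$: as recalled above, there is a complex gauge transformation putting $\mathcal D_3 = \pa_y$, after which $[\mathcal D_1,\mathcal D_3]=[\mathcal D_2,\mathcal D_3]=0$ force $\mathcal D_1,\mathcal D_2$ to be $y$-independent, giving a $y$-independent Higgs bundle $(\bar\pa_E,\Phi)$ on $\Sigma$ together with a Hermitian metric $H(y)$ solving the evolution equation \eqref{moment map}. The asymptotic condition as $y\to\infty$ forces $H(y)$ to limit to a solution of Hitchin's equation \eqref{Hitchin}, and by the nonabelian Hodge correspondence \tref{NHC} this is possible precisely when $(\bar\pa_E,\Phi)$ is polystable; irreducibility of the limiting flat $\SL(2,\C)$ connection upgrades this to stability. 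The holomorphic line bundle $\mathcal L\hookrightarrow E$ is then extracted exactly as in the ``small section'' discussion of \S4.2: it is the subsheaf of $\mathcal D_3$-parallel sections decaying like $y^{1/2}$ (resp.\ $\psi_j^{1/2}$ near the divisor points), which is holomorphic because $[\mathcal D_1,\mathcal D_3]=0$ makes $\mathcal D_3$-parallel transport intertwine the holomorphic structures on different slices, and it extends across $\mathcal D$ by the model computation with $\phi_{1,k}$. This produces the map $\underline{\EBE}\to\{(\bar\pa_E,\Phi,\mathcal L)\}$, and one checks it descends to the gauge quotient.

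For injectivity, suppose two solutions yield gauge-equivalent triples $(\bar\pa_E,\Phi,\mathcal L)$. After aligning the underlying Higgs bundles by a complex gauge transformation, both solutions give metrics $H_1(y),H_2(y)$ solving \eqref{moment map} with the same Nahm-pole behavior at $y=0$ dictated by $\mathcal L$ and the same limit at $y\to\infty$. Writing $H_2 = H_1 e^{s}$ with $s$ self-adjoint, the difference satisfies a nonlinear parabolic-type (Bochner) inequality; a maximum-principle argument on $\Sigma\times\R_+$, using the decay of $s$ at both ends supplied by the matching boundary conditions, forces $s\equiv 0$. This is the standard uniqueness half of a Donaldson–Uhlenbeck–Yau–type theorem, adapted to the product geometry; here the main technical care is in the boundary analysis at $y=0$, where one must know that $s$ genuinely vanishes (not merely stays bounded) to run the argument — this is where the precise weighted Nahm-pole norms in the definitions of \S4.1 are used.

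For surjectivity — constructing a solution from an arbitrary triple $(\bar\pa_E,\Phi,\mathcal L)$ with $(\bar\pa_E,\Phi)$ stable — one must solve the boundary-value problem \eqref{moment map} for $H(y)$ with the Nahm-pole singularity at $y=0$ encoded by $\mathcal L$ and with $H(y)$ limiting to the harmonic metric as $y\to\infty$. The strategy is: build an approximate solution $H_{\mathrm{app}}$ by gluing the rotationally symmetric model metrics \eqref{ansatz}, \eqref{unitary triple} near the divisor points and near the boundary $y=0$ (using the $\phi_{1,0}$ model away from $\mathcal D$) to the pulled-back harmonic metric for large $y$; then correct it to an exact solution by a continuity method or a fixed-point argument, solving the linearized operator in suitable weighted (b- or edge-type) Sobolev spaces adapted to the corner $\Sigma\times\{0\}$ and to $y\to\infty$. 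The linearized operator is, up to lower-order terms, $-g_0^2\pa_y^2 + \Delta_{\bar\pa_E,H} + (\text{curvature of }\Phi)$, and the key points are Fredholmness in the weighted spaces and a coercivity estimate coming from stability (which gives a spectral gap for the elliptic part at $y=\infty$) together with the indicial-root analysis of the Nahm pole at $y=0$. I expect the surjectivity/existence step, and within it the weighted linear analysis at the boundary corner, to be the main obstacle: one needs uniform control of the inverse of the linearization across the whole half-cylinder, matching the distinct asymptotic regimes, and the non-compactness in the $y$-direction together with the mild singularity of the model solutions at the corner $r=y=0$ near each $p_j$ makes the function-space bookkeeping delicate.
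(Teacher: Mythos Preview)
This theorem is not proved in the present paper: it is explicitly introduced as ``the main theorem in \cite{MH2}'' and is quoted here only as background for the moduli-theoretic refinement that occupies Section~5. There is therefore no proof in this paper to compare your proposal against.

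That said, your outline is a faithful sketch of the Kobayashi--Hitchin argument that \cite{MH2} actually carries out: extraction of the Higgs pair via parallel holomorphic gauge and the $y\to\infty$ asymptotics, extraction of $\mathcal L$ from the small-section construction, uniqueness of the metric via a convexity/maximum-principle argument for $s$ in $H_2=H_1e^s$, and existence by building an approximate solution from the models of \S4.1 and correcting it via a continuity method in weighted edge-type spaces. Your identification of the main obstacle --- the weighted linear analysis at the Nahm-pole boundary, particularly at the corners $r=y=0$ near the knot points, and matching it to the cylindrical end --- is indeed where the bulk of the technical work in \cite{MH2} lies.
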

\begin{remark}
The divisor $\mathcal D$ can be extracted from the holomorphic data as follows. Consider the holomorphic map 
\begin{equation}
1\wedge \Phi: \mathcal L \oplus \mathcal L\otimes K^{-1} \xrightarrow{1\oplus\Phi} \mathcal E \xrightarrow{\text{det}} \mathcal O.
\end{equation}
This map descends to a map 
\begin{equation}
F: \mathcal L^2\otimes K^{-1} \rightarrow \mathcal O,
\end{equation}
and thus gives rise to a holomorphic section of $K\otimes\mathcal L^{-2}$. The zeroes and multiplicities of this section specify the divisor $\mathcal D$. 
\end{remark}
\begin{remark}
The bijective correspondence proven in \cite{MH2} is a \emph{set} correspondence. Although the question is implicit in the paper's introduction, the authors did not investigate the \emph{moduli} behavior of solutions to the extended Bogomolny equations. 
\end{remark}
Motivated by the adiabatic approach to the Kapustin-Witten equations explained in the introduction of \cite{MH2} we restrict ourselves to $\underline{\EBE}_{\mathcal D}$, the set of solutions to EBE with the asymptotic behavior described above and with fixed divisor $\mathcal D$. We want to linearize \eqref{EB2} around a solution $(\mathcal D_1, \mathcal D_2, \mathcal D_3)$ and understand the kernel of the linearization modulo gauge transformations. Let 
\begin{equation}
\begin{split}
\tilde{\mathcal D_1} &= \mathcal D_1 + \epsilon b, \\
\tilde{\mathcal D_2} &= \mathcal D_2 + \epsilon\phi, \\
\tilde{\mathcal D_3} &= \mathcal D_3 + \epsilon\phi_1, \\
\end{split}
\end{equation}
where $b \in \Omega^{0,1}(\mathfrak{g}_E)$, $\phi \in \Omega^{1,0}(\mathfrak{g}_E)$ and $\phi_1 \in \Omega^{0}(\mathfrak{g}_E)$. Since we are considering the linearization up to gauge transformations, we may work in the gauge where $\phi_1 = 0$. Assuming this condition, we have removed some of the gauge freedom but not all of it. In order to see this, let $H$ be the solution metric corresponding to the solution $(\mathcal D_1, \mathcal D_2, \mathcal D_3)$ of \eqref{EB2}. Write $H = g^{\dagger}g$. Then, we know that we can express 
\begin{equation}
\begin{split}
\mathcal D_1 &= g\circ \bar\pa_E\circ g^{-1}\\
\mathcal D_2 &= g\circ \Phi\circ g^{-1}\\
\mathcal D_3 &= g\circ \pa_y\circ g^{-1},
\end{split}
\end{equation}
where $(\bar\pa_E,\Phi)$ is the stable Higgs pair associated to the solution $(\mathcal D_1, \mathcal D_2, \mathcal D_3)$ of \eqref{EB2} coming from the $y\to\infty$ asymptotic condition. For any $y$-independent gauge transformation $g_1$, the gauge transformation $gg_1g^{-1}$ preserves $\mathcal D_3 = \tilde{\mathcal D_3}$ and therefore we also need to consider quotients by these gauge transfomrations. Notice that the gauge condition $\phi_1 = 0$ implies that the linearizations of equations $[\mathcal D_1, \mathcal D_3] = 0$ and $[\mathcal D_2, \mathcal D_3] = 0$ are given by 
\begin{equation}\label{parallel3}
\begin{split}
\mathcal D_3 b &= 0\\
\mathcal D_3 \phi &= 0,
\end{split}
\end{equation}
while the linearization of the other two equations is gauge equivalent to the linearization of Hitchin's self duality equations \cite{Hitchin}. The extra gauge freedom is precisely the gauge freedom that appears in Hitchin's paper \cite{Hitchin}. The equations \eqref{parallel3} imply that $\beta:= g\circ b\circ g^{-1}$ and $\varphi:= g\circ \phi\circ g^{-1}$ are $y$-independent and the previous observation implies that the kernel of the last two equations after quotienting by gauge transformations can be identified with the tangent space of the Hitchin moduli space at the stable pair $(\bar\pa_E, \Phi)$. 

We are not done yet. We have not investigated the conditions that we need to impose so that these deformations preserve the divisor $\mathcal D$. This is the essential step. In order for this to happen, it must hold that $b,\phi$ blow up at most like $\mathcal O(y^{-1+\epsilon})$ as $y\to 0$. 

We know that fixing $\mathcal D$ implies that $\mathcal L \cong K(-\mathcal D)^{1/2}$ and that $E\cong \mathcal L\oplus\mathcal L^{\star}$ smoothly but not necessarily holomorphically. In a neighborhood of a point at the boundary we can always choose a local holomorphic frame so that $(1,0)^{\dagger}$ represents a section of $\mathcal L$ and $(0,1)^{\dagger}$ represents a section of $\mathcal L^{\star}$. In these local holomorphic coordinates 
\begin{equation}
\bar\pa_E = \begin{pmatrix} \bar\pa_{\mathcal L} & \beta_1 \\ 0 & \bar\pa_{\mathcal L^{\star}} \end{pmatrix}
\end{equation}

and the $y$-independent field is given by
\begin{equation}\label{localfield}
\varphi = \begin{pmatrix}a & b \\ c & d \end{pmatrix}.
\end{equation}
We also know for example from \cite{MH2} that in this coordinate system 
\begin{equation}
g = \begin{pmatrix} e^{-\frac{u_k}{2}} & 0 \\ 0 & e^{\frac{u_k}{2}} \end{pmatrix} + \mathcal O(y^{-1/2 + \epsilon})
\end{equation}
where $k = 0$ away from points of $\mathcal D$ and $k = k_j$ at $p_j\in\mathcal D$. From the formula $\phi = g^{-1}\circ \varphi \circ g$ we see that the only way in which $\phi = \mathcal O(y^{-1+\epsilon})$ as $y\to 0$ is if $c = 0$ identically in \eqref{localfield}. Since the holomorphic embedding $\mathcal L \hookrightarrow E$ is fixed when we fix $\mathcal D$, 
\begin{equation}
\beta = \begin{pmatrix} 0 & \beta_2 \\ 0 & 0 \end{pmatrix}
\end{equation}
in the local holomorphic coordinates we have introduced. Therefore we have proven that the tangent space of $\underline{EBE}_{\mathcal D}$ at the point \[(\mathcal D_1, \mathcal D_2, \mathcal D_3) = (g\circ \bar\pa_E\circ g^{-1}, g\circ \Phi\circ g^{-1}, g\circ \pa_y\circ g^{-1})\] is given by the subspace of the tangent space to $(\bar\pa_E,\Phi)$ inside the Hitchin space which is defined as those pairs $(\beta,\varphi)$ inside the tangent space which up to gauge belong to \[ \Omega^{0,1}(N_+)\oplus\Omega^{1,0}(L\oplus N_+).\]

We show that this subspace is a holomorphic Lagrangian subspace of the tangent space. To do this, start with a stable Higgs pair given by 
\begin{equation}\label{model}
\bar\pa_E = \begin{pmatrix} \bar\pa_{\mathcal L} & b \\ 0 & \bar\pa_{\mathcal L^{\star}} \end{pmatrix}~\text{and}~\Phi_0= \begin{pmatrix} \Phi_2 & \Phi_3 \\ \Phi_1 & \Phi_4 \end{pmatrix}.
\end{equation}

Define the operators 

\begin{equation}
D'':= \bar\pa_E + \Phi,~D':= \pa^H_E + \Phi^{\star_H}.
\end{equation}
Consider the cochain complex $C_+(\bar\pa_E,\Phi)$

\begin{equation}
\Omega^0(N_+) \xrightarrow{D''} \Omega^{0,1}(N_+)\oplus\Omega^{1,0}(L\oplus N_+) \xrightarrow{D''} \Omega^{1,1}(L\oplus N_+).
\end{equation}
We claim that $H^1(C_+(\bar\pa_E,\Phi))$ of this complex, which we have shown above to be the tangent space to a solution of $\underline{EBE}_{\mathcal D}$, is of dimension $6g-6$. The proof is essentially lemma $3.6$ from \cite{CW}. In that paper the authors only use this in the case where $\deg\mathcal L >0$ but the proof works in the general case. Re-write the complex in the form 

\begin{equation}
\slashed D:= (D'')^{\star} + D'':\Omega^{0,1}(N_+)\oplus\Omega^{1,0}(L\oplus N_+) \rightarrow \Omega^0(N_+)  \oplus\Omega^{1,1}(L\oplus N_+).
\end{equation}
Since $(\bar\pa_E,\Phi)$ is stable, $H^0(C_+(\bar\pa_E,\Phi)) = H^2(C_+(\bar\pa_E,\Phi)) = 0$. Therefore, dim$H^1(C_+(\bar\pa_E,\Phi)) =$ index $\slashed D$. Deforming the Higgs field to zero does not change the index but it decouples the operators. Thus \[\text{index } \slashed D = \text{index }\bar\pa_0 - \text{index }\bar\pa_1,\] where $\bar\pa_0$ is the $\bar\pa$-operator on sections of $(L\oplus N_+)\otimes K$ induced by $\bar\pa_E$ and $\bar\pa_1$ is similarly defined for sections of $N_+$. Since deg$L = 0$, by Riemann-Roch:

\begin{equation*}
\begin{split}
\text{index }\bar\pa_0 = \deg N_+ + (\text{rank } L + \text{rank }N_+)(g-1)\\
\text{index }\bar\pa_1 = \deg N_+ - (\text{rank }N_+)(g-1).
\end{split}
\end{equation*}
Since \[ \text{rank }L + 2\text{rank }N_+ = \text{rank }\mathfrak{sl}(2,E) = 6,\]  index $\slashed D = 6g-6$ as wanted. Finally the fact that this subspace is holomorphic and isotropic follow immediately from plugging $(\beta,\varphi)$ of the above form into the expressions \eqref{complex structure} and \eqref{symplectic structure}.

\subsection{Holomorphic Lagrangians}

In the previous subsection we showed that the tangent space at a solution inside $\underline{\EBE}_{\mathcal D}$ is in a natural way a holomorphic Lagrangian subspace of the tangent space to the stable Higgs pair associated to the solution. The aim of this section is to prove two statements. First, when the degree of the line bundle $\mathcal L$ is positive, we show that the stable Higgs pairs associated to the elements of $\underline{\EBE}_{\mathcal D}$ are exactly the points of the stable manifold $W^0(\bar\pa_0,\Phi_0)$ for a uniquely determined $\C^{\star}$-fixed point $(\bar\pa_0,\Phi_0)$. Furthermore, the tangent space at each point $[(\bar\pa_E,\Phi)] \in W^0(\bar\pa_0,\Phi_0)$ is exactly the tangent space to the solution in $\underline{\EBE}_{\mathcal D}$ at the point determined by $[(\bar\pa_E,\Phi)]$. This proves that as long as $|\mathcal D| < 2g-2$, the moduli space $\underline{\EBE}_{\mathcal D}$ is diffeomorphic to one of the stable manifolds $W^0(\bar\pa_0,\Phi_0)$. 

When the degree of $\mathcal L$ is non-positive, we show that the moduli spaces are diffeomorphic to certain holomorphic Lagrangians inside the Hitchin moduli space which however are no longer stable submanifolds. From the previous section we know that, assuming the existence of a point $[(\bar\pa_E,\Phi)]$ inside the Hitchin moduli space together with a holomorphic embedding $\mathcal L \hookrightarrow E$ such that the zeroes and multiplicities of the section $1\wedge\Phi$ of $K\otimes\mathcal L^{-2}$ are given by $\mathcal D$, the set of points with this property must be a holomorphic Lagrangian submanifold of the Hitchin moduli space. 

We start with the $\deg\mathcal L >0$ case.

\begin{theorem}\label{positive degree}
Assume that the degree of the holomorphic line subbundle in the effective triple $(\bar\pa_E,\Phi,\mathcal L)$ is strictly positive. Then the set of stable Higgs pairs $(\bar\pa_E,\Phi)$ which preserve the map $F$ are precisely those in the stable manifold $W^0(\bar\pa_0,\Phi_0)$ where 

\begin{equation}\label{fixed}
\bar\pa_0 = \begin{pmatrix} \bar\pa_{\mathcal L} & 0 \\ 0 & \bar\pa_{\mathcal L^{\star}} \end{pmatrix}~\text{and}~\Phi_0= \begin{pmatrix} 0 & 0 \\ \Phi_1 & 0 \end{pmatrix}.
\end{equation}
Here $\Phi_1$ is specified by the map $F$ completely. 

\end{theorem}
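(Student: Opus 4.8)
The plan is to prove the asserted set equality by establishing both inclusions, with Proposition~\ref{normal form} of \cite{CW} as the main engine; the hypothesis $\deg\mathcal L>0$ enters only to make $(\bar\pa_0,\Phi_0)$ an admissible base point. First I would fix notation. Since $\mathcal L=K(-\mathcal D)^{1/2}$ one has $\mathcal L^{-2}\otimes K\cong\mathcal O(\mathcal D)$, and $F=1\wedge\Phi$ is the holomorphic section of this line bundle whose zero divisor is $\mathcal D$; since $\mathcal D$ is effective, $F\not\equiv 0$, and I take $\Phi_1$ to be exactly this section, so that $(\bar\pa_0,\Phi_0)$ in \eqref{fixed} is well defined. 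Two preliminary observations are needed. First, $(\bar\pa_0,\Phi_0)$ is \emph{stable}: a $\Phi_0$-invariant line subbundle not contained in the summand $\mathcal L^{\star}$ would project nontrivially onto $\mathcal L$, and applying $\Phi_0$ would then force $\Phi_1\equiv 0$; hence every $\Phi_0$-invariant line subbundle lies in $\mathcal L^{\star}$ and has degree $\le-\deg\mathcal L<0$. This is the one place $\deg\mathcal L>0$, equivalently $|\mathcal D|<2g-2$, is used. Second, $W^0(\bar\pa_0,\Phi_0)$ contains no strictly polystable points, since such a point is of the form $\mathcal M\oplus\mathcal M^{-1}$ with $\deg\mathcal M=0$ and diagonal Higgs field, whose $\C^{\star}$-limit has vanishing Higgs field and so differs from $[(\bar\pa_0,\Phi_0)]$. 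Thus both sides of the claimed equality lie in $\mathcal M^{st}_H$.

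For the inclusion showing that every point of $W^0(\bar\pa_0,\Phi_0)$ preserves $F$, let $[(\bar\pa_E,\Phi)]\in W^0(\bar\pa_0,\Phi_0)$. By Proposition~\ref{normal form}, after a gauge transformation $\bar\pa_E-\bar\pa_0\in\Omega^{0,1}(N_+)$ and $\Phi-\Phi_0\in\Omega^{1,0}(L\oplus N_+)$. In the $C^{\infty}$-splitting $E=\mathcal L\oplus\mathcal L^{\star}$ the first condition is exactly the statement that $\mathcal L$ is a holomorphic subbundle of $(E,\bar\pa_E)$, and the second leaves the off-diagonal block of $\Phi$ that contains $\Phi_0$ unchanged, equal to $\Phi_1$. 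A short computation of $1\wedge\Phi=\det\circ(1\oplus\Phi)$ in this splitting identifies it with precisely that block, so $1\wedge\Phi=\Phi_1=F$, and $(\bar\pa_E,\Phi)$ together with the embedding $\mathcal L\hookrightarrow E$ preserves $F$.

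For the reverse inclusion, take a stable Higgs pair $(\bar\pa_E,\Phi)$ with a holomorphic embedding $\iota\colon\mathcal L\hookrightarrow E$ such that $1\wedge\Phi=F$. Since $\det E$ is trivial, $E/\iota(\mathcal L)\cong\mathcal L^{\star}$ holomorphically, so a $C^{\infty}$-splitting $E=\mathcal L\oplus\mathcal L^{\star}$ extending $\iota$ exists; because $\mathcal L$ is $\bar\pa_E$-holomorphic, $\bar\pa_E$ is block triangular with diagonal $(\bar\pa_{\mathcal L},\bar\pa_{\mathcal L^{\star}})$, i.e.\ $\bar\pa_E-\bar\pa_0\in\Omega^{0,1}(N_+)$, and the determinant computation above forces the block of $\Phi$ containing $\Phi_0$ to equal $F=\Phi_1$ (an overall constant, reflecting a rescaling of $\iota$, is absorbed by a diagonal gauge transformation). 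Hence $\Phi-\Phi_0\in\Omega^{1,0}(L\oplus N_+)$ too, so $(\bar\pa_E,\Phi)$ is in the normal form of Proposition~\ref{normal form}, giving $[(\bar\pa_E,\Phi)]\in W^0(\bar\pa_0,\Phi_0)$. As an independent check one may run the $\C^{\star}$-limit directly: conjugating $(\bar\pa_E,\xi\Phi)$ by $\operatorname{diag}(\xi^{-1/2},\xi^{1/2})$ keeps the block containing $\Phi_0$ equal to $\Phi_1$ while sending the other three blocks to $\mathcal O(\xi)$, so the limit representative is $(\bar\pa_0,\Phi_0)$, which, being stable, is the limit point in $\mathcal M_H$.

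I expect the main difficulty to be bookkeeping rather than conceptual: one must keep the triangular and $N_\pm$ conventions consistent among \eqref{fixed}, Proposition~\ref{normal form}, and the definition of $F$, and check that $1\wedge\Phi$ is genuinely a single off-diagonal block of $\Phi$ with no stray contribution from the choice of $C^{\infty}$-splitting or from the extension class in $\bar\pa_E$. The one genuinely substantive point is verifying that the hypotheses---$1\wedge\Phi=F$ and $\iota$ holomorphic---pin the pair down to the \emph{full} normal form of Proposition~\ref{normal form}, namely that the relevant block of $\Phi$ is literally $\Phi_1$ and not merely a holomorphic section with the correct zero divisor; once this is in place the equivalence is immediate, and the excluded range $|\mathcal D|\ge 2g-2$ is precisely where $(\bar\pa_0,\Phi_0)$ ceases to be stable.
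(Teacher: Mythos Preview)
Your proposal is correct and follows essentially the same route as the paper: both arguments hinge on Proposition~\ref{normal form} and on the local identification of $1\wedge\Phi$ with the lower-left block $\Phi_1$. You are more explicit than the paper in two places---you verify directly that $(\bar\pa_0,\Phi_0)$ is stable (isolating precisely where $\deg\mathcal L>0$ enters) and you spell out the reverse inclusion, which the paper dispatches with ``clearly''---but these are clarifications rather than a different strategy.
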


\begin{remark}
The latter statement follows easily by writing the map $F$ in local holomorphic coordinates. Indeed $\mathcal L$ is locally spanned by $(1,0)^{\dagger}$ and thus $1\wedge\Phi_0$ is locally written as \[ \begin{pmatrix} 1 \\ 0 \end{pmatrix}\wedge \begin{pmatrix} 0 & 0 \\ \Phi_1 & 0 \end{pmatrix}\begin{pmatrix} 1 \\ 0 \end{pmatrix} = \Phi_1.\]
\end{remark}

\begin{proof}
The first step is to notice that $\mathcal L$ is uniquely determined by the map $F$. Indeed, the map $F$ is equivalent to a holomorphic section of the line bundle $K\otimes\mathcal L^{-2}$. This identifies $K\otimes\mathcal L^{-2} \cong \mathcal O (D)$ where $D = \sum\limits_{j=1}^{n} k_j p_j$ is the divisor given by the knot points $p_j$ and their multiplicities $k_j$. Thus $\mathcal L \cong K(-D)^{1/2}$. 

The next observation is that if $(\bar\pa_E,\Phi,\mathcal L)$ is an effective triple corresponding to the divisor $D$, then so is the whole $\C^{\star}$-family $(\bar\pa_E,\xi\Phi,\mathcal L)$. Indeed multiplying the field by $\xi$ does not affect the zeroes and multiplicities. Now, it is clear that the $\C^{\star}$-fixed point \eqref{fixed} is an effective triple with divisor $D$. From proposition \eqref{normal form} it holds that any stable Higgs pair in the stable manifold $W^0(\bar\pa_0,\Phi_0)$ up to gauge transformation is of the form 

\begin{equation}\label{fixed}
\bar\pa_0 = \begin{pmatrix} \bar\pa_{\mathcal L} & \alpha \\ 0 & \bar\pa_{\mathcal L^{\star}} \end{pmatrix}~\text{and}~\Phi_0= \begin{pmatrix} \Phi_2 & \Phi_3 \\ \Phi_1 & -\Phi_2 \end{pmatrix}.
\end{equation}
This in particular implies that $\mathcal L$ is still a holomorphic line subbundle of $\mathcal E$ and that in local holomorphic coordinates $F$ is still given by $\Phi_1$. Therefore it is indeed true that any element $[(\bar\pa_E,\Phi)]\in W^0(\bar\pa_0,\Phi_0)$ satisfies that $(\bar\pa_E,\Phi,\mathcal L)$ is an effective triple corresponding to the divisor $D$. Clearly, the above proof also implies that these are all the effective triples corresponding to $D$. 

\end{proof}

Next, we focus on the case when $\deg\mathcal L \le 0$. In this case, we cannot apply the structure theory from \cite{CW} to produce an explicit point inside the Hitchin moduli space giving rise to a solution in $\underline{\EBE}_{\mathcal D}$. What we will do is derive an invariant characterisation of the section $1\wedge\Phi$ of $K\otimes\mathcal L^{-2}$ which will ensure the existence of such points. In fact, we prove more:
\begin{theorem}
The moduli spaces $\underline{\EBE}_{\mathcal D}$ are always non-empty. The holomorphic Lagrangians inside the Hitchin space diffeomorphic to these spaces have the following structure. Let $M_{\mathcal L}$ denote the subvariety of stable bundles inside $M_H$ which are extensions of the form 
\begin{equation}\label{stable extension}
0 \rightarrow \mathcal L \rightarrow E \rightarrow \mathcal L^{\star} \rightarrow 0.
\end{equation}
The holomorphic lagrangians diffeomorphic to $\underline{\EBE}$ are total spaces of fiber bundles with affine fibers over $M_{\mathcal L}$ minus the zero section. 
\end{theorem}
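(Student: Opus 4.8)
The strategy is to work entirely on the holomorphic side. Via the set-theoretic correspondence of \cite{MH2}, $\underline{\EBE}_{\mathcal D}$ is identified with the set of effective triples $(\bar\pa_E,\Phi,\mathcal L)$ with divisor $\mathcal D$: the line bundle $\mathcal L\cong K(-\mathcal D)^{1/2}$ is fixed, $(\bar\pa_E,\Phi)$ is a stable Higgs pair, $\mathcal L\hookrightarrow E$ is a holomorphic sub-line bundle, and the induced section $1\wedge\Phi$ of $K\otimes\mathcal L^{-2}\cong\mathcal O(\mathcal D)$ has zero divisor exactly $\mathcal D$. Since a section of $\mathcal O(\mathcal D)$ whose divisor is all of $\mathcal D$ is unique up to a scalar, this condition is equivalent to requiring that the composite $\mathcal L\hookrightarrow E\xrightarrow{\Phi}E\otimes K\to (E/\mathcal L)\otimes K\cong\mathcal L^{\star}\otimes K$ (using $\det E\cong\mathcal O$) be a nonzero multiple of the canonical section $\mathbf 1_{\mathcal D}$. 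In particular each such $E$ sits in a non-split extension $0\to\mathcal L\to E\to\mathcal L^{\star}\to0$ with class $\xi\in H^1(\mathcal L^2)$, and $1\wedge\Phi$ is its lower-left block relative to this filtration.

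Granting this, the plan has three stages. First, I would define the map $\underline{\EBE}_{\mathcal D}\to\mathcal M_{\mathcal L}$ sending a triple to the isomorphism class of $E$, checking along the way that $E$ is \emph{stable as a bundle} (so this really lands in $\mathcal M_H$): a destabilizing sub-line bundle $M$ would satisfy $\deg M\ge0>\deg\mathcal L$, hence $\operatorname{Hom}(M,\mathcal L)=0$, so $M$ embeds in $\mathcal L^{\star}$ with $0\le\deg M\le-\deg\mathcal L$, and one rules this out using Higgs-stability (which forbids $\Phi$-invariant sub-line bundles of nonnegative degree) together with the nondegeneracy of $1\wedge\Phi=\mathbf 1_{\mathcal D}$; the split bundle $\mathcal L\oplus\mathcal L^{\star}$ does not occur, which accounts for the removed ``zero section''. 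Second, analyze the fibers: after normalizing so that $1\wedge\Phi=\mathbf 1_{\mathcal D}$, the $\Phi$'s over a given $[E]$ form a coset of the linear space $H^0$ of flag-preserving traceless $K$-twisted endomorphisms -- an affine space -- and using Riemann--Roch together with stability of $E$ (which gives $h^0(\mathfrak{sl}(E)\otimes K)=3g-3$ and $H^0(\mathfrak{sl}(E))=0$) one shows the fiber dimension is locally constant over $\mathcal M_{\mathcal L}$, so the pieces assemble into a fiber bundle with affine fibers. Third, invoke the previous subsection: the total space is smooth of complex dimension $3g-3$ and is a holomorphic Lagrangian inside $\mathcal M_H$.

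The serious content is non-emptiness, and it splits in two. Part (i): $\mathcal M_{\mathcal L}\neq\varnothing$, i.e.\ stable extensions $0\to\mathcal L\to E\to\mathcal L^{\star}\to0$ exist. This is a standard genericity argument: $h^1(\mathcal L^2)=h^0(\mathcal O(\mathcal D))>0$ when $|\mathcal D|\ge2g-2$, and since $g\ge2$ the locus of extension classes yielding an unstable $E$ is a union of finitely many proper subvarieties (each destabilizing sub-line bundle as above forces $\xi$ into the image of $H^1$ of a proper subsheaf). Part (ii), the hard one: a given $E\in\mathcal M_{\mathcal L}$ must carry a holomorphic Higgs field realizing $\mathbf 1_{\mathcal D}$ as its lower-left block. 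Working in a local splitting where $\bar\pa_E$ is $\bar\pa_0$ twisted by a $(0,1)$-form $\beta$ representing $\xi$, the equation $\bar\pa_{\operatorname{End}}\Phi=0$ with prescribed lower-left block $\mathbf 1_{\mathcal D}$ reduces first to solving $\bar\pa a=-\beta\wedge\mathbf 1_{\mathcal D}$ for the diagonal block, which is solvable precisely when the Serre-dual cup-product pairing $H^1(\mathcal L^2)\otimes H^0(\mathcal O(\mathcal D))\to H^1(K)\cong\C$ annihilates $\xi\otimes\mathbf 1_{\mathcal D}$, and then to further $\bar\pa$-equations whose solvability is tracked the same way.

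This vanishing is exactly the content of the coordinate-free small section condition; establishing that it holds for the stable extensions in $\mathcal M_{\mathcal L}$ -- equivalently, that the relevant connecting homomorphism $H^0(\mathcal O(\mathcal D))\to H^1$ of the flag sub-bundle $\mathfrak{sl}^{\le 0}(E)\otimes K$ annihilates $\mathbf 1_{\mathcal D}$ -- and checking that the resulting affine fiber has dimension constant over all of $\mathcal M_{\mathcal L}$ rather than just generically, is the technical heart and the step I expect to be the main obstacle.
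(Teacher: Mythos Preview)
Your strategy is essentially the paper's: pass to holomorphic triples, project to the extension class of $E$ in $\mathcal M_{\mathcal L}$, identify the fiber as an affine space of Higgs fields with prescribed lower-left block, and reduce non-emptiness to lifting $\mathbf 1_{\mathcal D}$ to a global Higgs field.

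The difference is packaging. You solve $\bar\pa_{\operatorname{End}}\Phi=0$ block by block and meet the obstructions one at a time; the paper instead builds the map $\Phi\mapsto 1\wedge\Phi$ invariantly as a composite of two sheaf surjections. From the filtration $\mathcal L\subset\mathcal E$ one extracts $0\to\mathcal L^2\to(\operatorname{End}\,\mathcal E)_0\to\mathcal L^{\star}\otimes\mathcal E^{\star}\to 0$, and tensoring $0\to\mathcal L\to\mathcal E\to\mathcal L^{\star}\to 0$ with $K\otimes\mathcal L^{-1}$ (using $\mathcal E\cong\mathcal E^{\star}$) gives $0\to K\to\mathcal L^{\star}\otimes\mathcal E\otimes K\to K\otimes\mathcal L^{-2}\to 0$; composing, $(\operatorname{End}\,\mathcal E)_0\otimes K\to K\otimes\mathcal L^{-2}$ is exactly $\Phi\mapsto 1\wedge\Phi$ and is surjective as a map of sheaves. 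Your two obstruction calculations are precisely the connecting homomorphisms in the long exact cohomology sequences of these two short exact sequences, so the cup-product vanishing you flag as the ``main obstacle'' is the same question as whether this sheaf surjection is onto on $H^0$. The paper simply asserts that surjectivity of the sheaf map yields the desired section and stops; you have in fact been more scrupulous than the paper in locating where the analytic content sits. The diagram buys a cleaner coordinate-free formulation of the map, but does not by itself discharge the $H^1$ obstruction you identified.

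One soft spot in your outline, orthogonal to the above: your argument that $E$ is automatically stable \emph{as a bundle} invokes Higgs-stability against a hypothetical destabilizing $M\subset E$, but $M$ need not be $\Phi$-invariant, so Higgs-stability does not apply directly and the appeal to ``nondegeneracy of $1\wedge\Phi$'' is not spelled out. The paper does not address this point either---it simply declares the base to be $\mathcal M_{\mathcal L}$---and note that in the boundary case $|\mathcal D|=2g-2$ the sub-bundle $\mathcal L$ itself has degree zero, so no such $E$ is strictly stable.
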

\begin{remark}
The reason we do not include the zero section is because when the Higgs field $\Phi = 0$, the section $1\wedge\Phi$ of $K\otimes\mathcal L^{-2}$ does not make sense. It is not clear if the holomorphic lagrangian stays smooth if one adds the zero section or not.
\end{remark}
\begin{remark}
The above characterization of the moduli spaces $\underline{\EBE}_{\mathcal D}$ seems to suggest that when $\deg \mathcal L \le 0$ the moduli spaces are no longer topologically trivial.
\end{remark}
\begin{conjecture}
Let $M_{st}$ denote the moduli space of stable bundles. In \cite{Hitchin}, Hitchin showed that the co-tangent bundle $T^{\star}M_{st}$ can be embedded into $M_H$ and $M_H$ can be thought of as a partial compactification of $T^{\star}M_{st}$. Let $i: T^{\star}M_{st} \hookrightarrow M_H$ denote the embedding. We conjecture that the pullback of the holomorphic lagrangians when $\deg \mathcal L \le 0$ is given by the co-normal bundle of $M_{\mathcal L}$ inside $T^{\star}M_{st}$ endowed with the natural symplectic structure. 
\end{conjecture}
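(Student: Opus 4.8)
\emph{Step 0: reduction to Higgs bundles.} By the correspondence of \cite{MH2} together with the tangent space computation of the previous subsection, $\underline{\EBE}_{\mathcal D}$ is identified with the locus
\[
\Lambda_{\mathcal D}:=\bigl\{\,[(\bar\pa_E,\Phi)]\in M_H\ :\ \exists\ \text{holomorphic}\ \mathcal L\hookrightarrow E\ \text{with}\ \mathrm{div}(1\wedge\Phi)=\mathcal D\,\bigr\},
\]
and that computation already shows that, once non-empty, $\Lambda_{\mathcal D}$ is a locally closed holomorphic Lagrangian in $M_H$. So it remains to prove non-emptiness and to determine the topology. I would first record a coordinate-free reformulation of the constraint: since $\mathcal L\cong K(-\mathcal D)^{1/2}$ is forced and $\det E\cong\mathcal O$ gives $E/\mathcal L\cong\mathcal L^\star$, a holomorphic embedding $\mathcal L\hookrightarrow E$ is the same datum as an extension class $\epsilon\in\mathrm{Ext}^1(\mathcal L^\star,\mathcal L)=H^1(\Sigma,\mathcal L^2)$; and $\mathrm{div}(1\wedge\Phi)=\mathcal D$ is equivalent to asking that the section $\overline\Phi$ of $\mathrm{Hom}(\mathcal L,\mathcal L^\star)\otimes K\cong\mathcal O(\mathcal D)$ defined by the composite $\mathcal L\hookrightarrow E\xrightarrow{\Phi}E\otimes K\twoheadrightarrow\mathcal L^\star\otimes K$ be the canonical section $s_{\mathcal D}$ cutting out $\mathcal D$, its overall scale pinned by matching leading coefficients with the Nahm pole model solutions. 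This is the coordinate-free small section condition.

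\emph{Step 1: non-emptiness.} I would produce one solution explicitly. Pick a non-split stable bundle $E$ fitting in $0\to\mathcal L\to E\to\mathcal L^\star\to 0$; a Riemann--Roch count shows $H^1(\Sigma,\mathcal L^2)\neq 0$, and a dimension argument that the unstable extensions form a proper subvariety of $\mathbb P H^1(\mathcal L^2)$, so such $E$ exist. Then solve $\bar\pa_E\Phi=0$ with $\overline\Phi=s_{\mathcal D}$: in an extension frame the lower-left block of $\Phi$ is pinned to $s_{\mathcal D}$ and the remaining blocks satisfy an inhomogeneous linear equation whose solvability is the assertion that $s_{\mathcal D}$ lies in the image of the restriction map $H^0(\mathfrak{sl}(E)\otimes K)\to H^0(\mathcal O(\mathcal D))$; this follows by vanishing of the obstruction in $H^1$ of the subsheaf $\mathcal F\subset\mathfrak{sl}(E)$ of traceless endomorphisms preserving $\mathcal L$, and stability of $E$ is precisely what makes that $H^1$ small enough. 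It then remains to check that $(\bar\pa_E,\Phi)$ is a stable Higgs pair: a destabilising $\Phi$-invariant sub-line-bundle $M$ would have $0\le\deg M\le\min(\deg\mathcal L^\star,\,g-1)$, map nontrivially to $\mathcal L^\star$, and be an eigen-line of $\Phi$ --- a short case analysis using $\overline\Phi=s_{\mathcal D}$ and non-splitness of $\epsilon$ excludes it.

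\emph{Step 2: the fibration.} Let $p\colon\Lambda_{\mathcal D}\to M_H$ send $[(\bar\pa_E,\Phi)]$ to the class $[(\bar\pa_E,0)]$ of its underlying holomorphic bundle. The same case analysis as in Step 1 shows this bundle is stable unless $\epsilon=0$, i.e.\ $E\cong\mathcal L\oplus\mathcal L^\star$; but in the split case $\mathcal L^\star$ is a $\Phi$-invariant summand of degree $\ge 0$, contradicting Higgs-stability when $\deg\mathcal L<0$. Thus $p$ takes values in $M_{\mathcal L}$ with the image of the split extension (the ``zero section'') deleted, and by the extension description it is onto $M_{\mathcal L}\setminus\{0\}$. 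Over a point $[E]$ the fibre is the space of pairs (a saturated embedding $\mathcal L\hookrightarrow E$, a compatible $\Phi$ with $\overline\Phi=s_{\mathcal D}$); for a fixed embedding $\{\Phi:\overline\Phi=s_{\mathcal D}\}$ is an affine subspace of $H^0(\mathfrak{sl}(E)\otimes K)$ modelled on $H^0(\mathcal F\otimes K)$, and a Riemann--Roch bookkeeping gives the fibre dimension $3g-3-\dim M_{\mathcal L}$, so $p$ is a fibre bundle with affine fibres of the expected dimension. Holomorphicity of $p$, and the Lagrangian property of $\Lambda_{\mathcal D}$, are read off from \eqref{complex structure}--\eqref{symplectic structure}. (The borderline case $\deg\mathcal L=0$, i.e.\ $|\mathcal D|=2g-2$, where $M_{\mathcal L}$ consists of strictly semistable bundles, I would handle separately, replacing $M_{\mathcal L}$ by the relevant $S$-equivalence quotient.)

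\emph{Expected main obstacle.} The genuine difficulty is the uniform stability statement in Step 1 --- equivalently, in Step 2, the claim that the underlying bundle of a point of $\Lambda_{\mathcal D}$ is always stable off the split locus. This is exactly the content packaged in the introduction's phrase ``invariant characterisation of the small section'': one must show that $\overline\Phi=s_{\mathcal D}$ together with non-splitness of the extension simultaneously rules out non-$\Phi$-invariant destabilising sub-bundles (stability of $E$) and $\Phi$-invariant ones (Higgs-stability), for every effective $\mathcal D$ with $|\mathcal D|\ge 2g-2$. Once this geometric input is in hand, the cohomology vanishing, the dimension count, and the affineness of the fibres are essentially formal.
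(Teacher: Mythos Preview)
The statement you were asked to prove is a \emph{conjecture}; the paper offers no proof of it. What you have written is not an argument for the conjecture at all, but rather a proof sketch for the \emph{theorem} immediately preceding it --- the one asserting that $\underline{\EBE}_{\mathcal D}$ is non-empty and fibres with affine fibres over $M_{\mathcal L}$ minus the zero section. The actual content of the conjecture is the identification of $i^{\star}\Lambda_{\mathcal D}$ with the \emph{conormal bundle} of $M_{\mathcal L}\subset M_{st}$ inside $T^{\star}M_{st}$; you never mention conormal bundles, never write down the canonical symplectic form on $T^{\star}M_{st}$, and never check that the affine fibres you produce are the conormal fibres. So as a response to the stated problem your proposal simply misses the target.

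If one reads your proposal instead as an alternative proof of the preceding theorem, it is worth recording how it differs from the paper's. The paper establishes non-emptiness by a sheaf-theoretic argument: from the filtration of $(\mathrm{End}\,\mathcal E)_0$ induced by the extension $0\to\mathcal L\to\mathcal E\to\mathcal L^{\star}\to 0$ it extracts a surjection of sheaves $(\mathrm{End}\,\mathcal E)_0\otimes K\twoheadrightarrow K\otimes\mathcal L^{-2}$ and concludes that any prescribed section $s_{\mathcal D}$ is hit. Your Step~1 instead builds an explicit $\Phi$ in an extension frame and checks Higgs-stability by hand. Your route is more concrete and makes the stability check explicit, at the cost of the case analysis you flag as the ``expected main obstacle''; the paper's diagram chase is cleaner but, as written, only gives surjectivity of the sheaf map and is silent on why the induced map on $H^0$ surjects (one still needs an $H^1$-vanishing for the kernel, which is close to the obstruction you identify). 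Neither argument, however, touches the conormal-bundle claim that is the substance of the conjecture.
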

\begin{proof}
 The non-trivial part of the proof is to show that the moduli spaces are non-empty. We start with the following exact diagram
\begin{equation}
\xymatrix{
 & 0 \ar[d] & 0 \ar[d] & 0 \ar[d] \\
0 \ar[r] & \mathcal L^2 \ar[r] \ar[d] & (\text{End}\,\mathcal E)_0 \ar[r] \ar[d] & \mathcal L^{\star}\otimes \mathcal E^{\star} \ar[r] \ar[d]_{\cong} & 0 \\
0 \ar[r] & \mathcal L\otimes\mathcal E^{\star} \ar[r] \ar[d] & \mathcal E\otimes\mathcal E^{\star} \ar[r] \ar[d]_{\text{tr}} & \mathcal L^{\star}\otimes\mathcal E^{\star} \ar[r] \ar[d] & 0 \\
0 \ar[r] & \mathcal L\otimes\mathcal L^{\star} \ar[r] \ar[d]& \mathcal O_{\Sigma} \ar[r] & 0 \\
& 0
}
\end{equation}
Here $\mathcal E$ denotes the vector bundle $E$ together with its complex structure. The second line of the diagram is given by the short exact sequence \eqref{stable extension} tensored with $\mathcal E^{\star}$. The proof of the exactness of this diagram is standard. Notice that since $\deg\, \mathcal E = 0$, $\mathcal E \cong \mathcal E^{\star}$. Tensoring the short exact sequence \eqref{stable extension} with $K\otimes\mathcal L^{-1}$, using the aforementioned isomorphism and the exactness of the above commutative diagram we have 
\begin{equation}
\xymatrix{
0 \ar[r] & \mathcal L^2\otimes K \ar[r] & (\text{End}\,\mathcal E)_0\otimes K \ar[r] & \mathcal L^{\star}\otimes \mathcal E^{\star}\otimes K \ar[r] \ar[d]_{\cong} & 0 \\
& 0 \ar[r] & K \ar[r]  & \mathcal L^{\star}\otimes \mathcal E \otimes K \ar[r] & K\otimes \mathcal L^{-2} \ar[r] & 0.
}
\end{equation}
Therefore, we have constructed a holomorphic surjective map from $(\text{End}\,\mathcal E)_0\otimes K$ to $K\otimes \mathcal L^{-2}$. Writing everything in local holomorphic coordinates it is easy to see that this is the map sending the Higgs field $\Phi$ to the holomorphic section $1\wedge\Phi$. The surjectivity of the map therefore implies that for any holomorphic section of $K\otimes \mathcal L^{-2}$ there exists holomorphic data producing this section and therefore the moduli spaces are non-empty as wanted.

\end{proof}

\section{Further directions}

The result of this paper is for the gauge group $\SL(2,\C)$. Both \cite{CW} and \cite{MH2} strongly suggest that similar results should be true when the gauge group is $\SL(n,\C)$. One significant difficulty in this more general setting is that the fixed points of the $\C^{\star}$-action have more complicated structure. At a $\C^{\star}$-fixed point $E \cong E_1\oplus ... \oplus E_k$ holomorphically where each $E_i$ is a stable bundle. Unless each $E_i$ is a line bundle, one needs to modify the generalized Nahm pole boundary condition in order to construct solutions to the EB equations from such holomorphic data. In order to do this one needs to obtain model solutions like the ones described in section \eqref{Boundary conditions} where the Higgs field \eqref{Jordan canonical} can now be an arbitrary strictly upper triangular $n\times n$ matrix with holomorphic entries which grow at most polynomially. Unlike the case studied in \cite{Mikhaylov}, it is not clear whether in this more general setting one can find a solution using a diagonal ansatz for the hermitian metric as in \eqref{ansatz}. Even in the situation when each $E_i$ is a line bundle, it is not completely clear how to generalize theorem \eqref{Main} unless there exists a Higgs field $\Phi$ such that $E \cong E_1\oplus ... \oplus E_k$ with its induced holomorphic structure from the splitting together with $\Phi$ form a fixed point of the $\C^{\star}$-action. In this case it is straightforward to generalize theorem \eqref{positive degree}.

One can ask the above question for arbitrary gauge group. In \cite{Mikhaylov} certain conjectures are made as to the existence and structure of model solutions in this general setting. The equivalent of \cite{CW} does not exist for more general groups. 

A natural generalization of this paper is to allow the divisor $\mathcal D$ to vary and try to describe the moduli structure of this larger space of solutions. As long as the number of points and respective magnetic charges do not change, so that no points collide or separate, one can pull back by a diffeomorphism of $\Sigma$ so that the divisor stays fixed. One then wants to study the moduli space of the EB equations with fixed divisor but with varying metric on $\Sigma$. In this paper we have proven that given a metric on $\Sigma$, we can associate to the triple $(\Sigma,g,\mathcal D)$ a moduli space which is diffeomorphic to a holomorphic lagrangian inside the moduli of Higgs bundles $\mathcal M_H$ with complex structure \eqref{complex structure} coming from the complex structure of $\Sigma$. Following the steps of the proof of theorem \eqref{Main} it is true that the moduli space $\underline{\EBE}_{\mathcal D}$ only depends on the metric up to conformal equivalence. Therefore, it is natural to expect that the moduli spaces $\underline{\EBE}_{\mathcal D}$ form a fibre bundle over the Teichm\"uller space associated to $\Sigma$. In fact this fibration should descend to a fibration over the Riemann moduli space $\mathcal M_{g,0}$. In \cite{Donagi} such a fibration is constructed when $|\mathcal D| = 0$ and it is shown to extend to the Deligne-Mumford compactification of $\mathcal M_{g,0}$. We conjecture that the same properties should hold for any divisor with $|\mathcal D| <2g-2$. This could have applications to both knot theory and to the asymptotic geometry of $\mathcal M_H$. 

A final way in which one can generalize the results of this paper is to work with the twisted Bogomolny equations (TBE) \cite{MHOpers, PD3}. Model solutions have been constructed in \cite{PD3} and the methods of \cite{MH2} should apply to this setting. It is expected that when $|\mathcal D| < 2g-2$ the associated holomorphic lagrangians are diffeomorphic to the spaces $W_{\alpha}^{\lambda}$ described in \cite{CW}. It follows from \cite{CW} that $W_{\alpha}^{\lambda}$ is diffeomorphic to $W_{\alpha}^0$ described above. When $|\mathcal D| \ge 2g-2$ it seems to be the case that the moduli space $\underline{\TBE}_{\mathcal D}$ is no longer diffeomorphic to $\underline{\EBE}_{\mathcal D}$. This is because while the $\C^{\star}$ limit of a stable Higgs pair with stable underlying bundle has zero Higgs field and therefore the small section is not defined, this is no longer the case in the setting of the twisted Bogomolny equations \cite[Section 3.3]{GW}.

\bibliography{bibliography}
\bibliographystyle{alpha}

\end{document}